\def\jobis#1{FF\fi
  \def\preedicate{#1}%
  \edef\preedicate{\expandafter\strip@prefix\meaning\preedicate}%
  \edef\job{\jobname}%
  \ifx\job\preedicate
}
\if\jobis{proposal}%
 \def\try{subsection}%
  \def\try{section}%
\theoremstyle{plain}
\newtheorem{theorem}{Theorem}[\try]
\newtheorem{lemma}[theorem]{Lemma}
\newtheorem{claim}[theorem]{Claim}
\newtheorem{definition-lemma}[theorem]{Definition-Lemma}
\newtheorem{definition}[theorem]{Definition}
\newtheorem{theorema}{Theorem}
\def\scr#1{\mathbf{\EuScript{#1}}}
\def\ideal#1.{I_{#1}}
\def\ring#1.{\mathcal {O}_{#1}}
\def\fring#1.{\hat{\mathcal {O}}_{#1}}
\def\proj#1.{\mathbb {P}(#1)}
\def\pr #1.{\mathbb {P}^{#1}}
\def\dpr #1.{\hat{\mathbb {P}}^{#1}}
\def\af #1.{\mathbb{A}^{#1}}
\def\Hz #1.{\mathbb{F}_{#1}}
\def\Hbz #1.{\overline{\mathbb {F}}_{#1}}
\def\fb#1.{\underset {#1} {\times}}
\def\rest#1.{\underset {\ \ring #1.} \to \otimes}
\def\au#1.{\operatorname {Aut}\,(#1)}
\def\deg#1.{\operatorname {deg } (#1)}
\def\pic#1.{\operatorname {Pic}\,(#1)}
\def\pico#1.{\operatorname{Pic}^0(#1)}
\def\picg#1.{\operatorname {Pic}^G(#1)}
\def\ner#1.{NS (#1)}
\def\rdown#1.{\llcorner#1\lrcorner}
\def\rfdown#1.{\lfloor{#1}\rfloor}
\def\rup#1.{\ulcorner{#1}\urcorner}
\def\rcup#1.{\lceil{#1}\rceil}
\def\cone#1.{\operatorname {NE}(#1)}
\def\mone#1.{\operatorname {NM}(#1)}
\def\ccone#1.{\overline{\operatorname {NE}}(#1)}
\def\cmone#1.{\overline{\operatorname {NM}}(#1)}
\def\none#1.{\operatorname {NF}(#1)}
\def\cnone#1.{\overline{\operatorname {NF}}(#1)}
\def\coef#1.{\frac{(#1-1)}{#1}}
\def\vit#1.{D_{\langle #1 \rangle}}
\def\mm#1.{\overline {M}_{0,#1}}
\def\H1#1.{H^1(#1,{\ring #1.})}
\def\ac#1.{\overline {\mathbb F}_{#1}}
\def\adj#1.{\frac {#1-1}{#1}}
\def\spn#1.{\overline{#1}}
\def\pek#1.#2.{\Cal P^{#1}(#2)}
\def\plk#1.#2.{\Cal P^{\leq #1}(#2)}
\def\ev#1.{\operatorname{ev_{#1}}}
\def\ilist#1.{{#1}_1,{#1}_2,\ldots}
\def\bminv#1.{(\nu_1,s_1;\nu_2,s_2;\dots ;\nu_{#1},s_{#1};\nu_{r+1})}
\def\zinv#1.{(\nu_1,s_1;\nu_2,s_2;\dots ;\nu_{#1},s_{#1};0)}
\def\iinv#1.{(\nu_1,s_1;\nu_2,s_2;\dots ;\nu_{#1},s_{#1};\infty)}
\def\scr#1.{\mathbf{\EuScript{#1}}}
\def\mg#1.{\overline {M}_{#1}}
\def\inter#1.{\underset #1{\cdot}}
\def\WDiv{\operatorname{WDiv}}
\def\llist#1.#2.{{#1}_1,{#1}_2,\dots,{#1}_{#2}}
\def\ulist#1.#2.{{#1}^1,{#1}^2,\dots,{#1}^{#2}}
\def\lomitlist#1.#2.{{#1}_1,{#1}_2,\dots,\hat {{#1}_i}, \dots, {#1}_{#2}}
\def\lomitlistz#1.#2.{{#1}_0,{#1}_1,\dots,\hat {{#1}_i}, \dots, {#1}_{#2}}
\def\loc#1.#2.{\Cal O_{#1,#2}}
\def\fderiv#1.#2.{\frac {\partial #1}{\partial #2}}
\def\deriv#1.#2.{\frac {d #1}{d #2}}
\def\map#1.#2.{#1 \longrightarrow #2}
\def\rmap#1.#2.{#1 \dasharrow #2}
\def\emb#1.#2.{#1 \hookrightarrow #2}
\def\non#1.#2.{\text {Spec }#1[\epsilon]/(\epsilon)^{#2}}
\def\Hi#1.#2.{\text {Hilb}^{#1}(#2)}
\def\sym#1.#2.{\operatorname {Sym}^{#1}(#2)}
\def\Hb#1.#2.{\text {Hilb}_{#1}(#2)}
\def\Hm#1.#2.{\Hom_{#1}(#2)}
\def\prd#1.#2.{{#1}_1\cdot {#1}_2\cdots {#1}_{#2}}
\def\Bl #1.#2.{\operatorname {Bl}_{#1}#2}
\def\pl #1.#2.{#1^{\otimes #2}}
\def\mgn#1.#2.{\overline {M}_{#1,#2}}
\def\ialist#1.#2.{{#1}_1 #2 {#1}_2, #2\dots}
\def\pair#1.#2.{\langle #1, #2\rangle}
\def\gproj#1.#2.{\mathbb{P}_{#1}(#2)}
\def\gpr #1.#2.{\mathbb{P}^{#1}_{#2}}
\def\gaf #1.#2.{\mathbb{A}^{#1}_{#2}}
\def\vandermonde#1.#2.{\left|
\begin{matrix}
1 & 1 & 1 & \dots & 1\\
{#1}_1 & {#1}_2 & {#1}_3 & \dots & {#1}_{#2}\\
{#1}_1^2 & {#1}_2^2 & {#1}_3^2 & \dots & {#1}_{#2}^2\\
\vdots & \vdots & \vdots & \ddots & \vdots\\
{#1}_1^{#2-1} & {#1}_2^{#2-1} & {#1}_2^{#2-1} & \dots & {#1}_{#2}^{#2-1}\\
\end{matrix}
\right|
}
\def\vandermondet#1.#2.{\left|
\begin{matrix}
1 & {#1}_1   & {#1}_1^2 & \dots & {#1}_1^{#2-1}\\
1 & {#1}_2   & {#1}_2^2 & \dots & {#1}_2^{#2-1}\\
1 & {#1}_3   & {#1}_3^2 & \dots & {#1}_3^{#2-1}\\
\vdots & \vdots & \vdots & \ddots & \vdots\\
1 & {#1}_{#2}& {#1}_{#2}^2 & \dots & {#1}_{#2}^{#2-1}\\
\end{matrix}
\right|
}
\def\gr#1.#2.{\mathbb{G}(#1,#2)}
\def\alist#1.#2.#3.{{#1}_1 #2 {#1}_2 #2\dots #2 {#1}_{#3}}
\def\zlist#1.#2.#3.{#1_0 #2 #1_1 #2\dots #2 #1_{#3}}
\def\lomitlist30#1.#2.#3.{{#1}_0,{#1}_1 #2 \dots #2\hat {{#1}_i} #2\dots #2 {#1}_{#3}}
\def\lmap#1.#2.#3.{#1 \overset{#2}{\longrightarrow} #3}
\def\mes#1.#2.#3.{#1 \longrightarrow #2 \longrightarrow #3}
\def\ses#1.#2.#3.{0\longrightarrow #1 \longrightarrow #2 \longrightarrow #3 \longrightarrow 0}
\def\les#1.#2.#3.{0\longrightarrow #1 \longrightarrow #2 \longrightarrow #3}
\def\res#1.#2.#3.{#1 \longrightarrow #2 \longrightarrow #3\longrightarrow 0}
\def\Hi#1.#2.#3.{\text {Hilb}^{#1}_{#2}(#3)}
\def\ten#1.#2.#3.{#1\underset {#2}{\otimes} #3}
\def\lomitlist30#1.#2.#3.{{#1}_0 #2 {#1}_1 #2 \dots #2 \hat {{#1}_i} #2 \dots #2 {#1}_{#3}}
\def\mderiv#1.#2.#3.{\frac {d^{#3} #1}{d #2^{#3}}}
\def\ggr#1.#2.#3.{\mathbb{G}_{#1}(#2,#3)}
\def\Hom{\operatorname{Hom}}
\def\sp{\operatorname{Spec}}
\def\Proj{\operatorname{Proj}}
\def\dim{\operatorname{dim}}
\def\deg{\operatorname{deg}}
\def\im{\operatorname{Im}}
\def\mult{\operatorname{mult}}
\def\fix{\operatorname{Fix}}
\def\rest{\operatorname{res}}
\def\adj{\operatorname{adj}}
\def\e{\Cal E}
\def\e1{E_1}
\def\e2{E_2}
\def\ds{\displaystyle}
\def\qle{\sim_{\mathbb Q}}
\def\bd{_{\bullet}}
\def\mapdown#1{\big\downarrow\rlap{$\vcenter
{\hbox{$\scriptstyle#1$}}$}}
\def\mapse#1{
{\vcenter{\hbox{$\mathop{\smash{\raise1pt\hbox{$\diagdown$}\!\lower7pt
\hbox{$\searrow$}}\vphantom{p}}\limits_{#1}\vphantom{\mapdown{}}$}}}}
\def\VR#1.{height#1pt&\omit&&\omit&&\omit&&\omit&&\omit&\cr}
\def\VRT#1.{height#1pt&\omit&&\omit&\cr}
\begin{document}
\title{Existence of minimal models for varieties of log general type II}  
\date{\today}
\author{Christopher D. Hacon} 
\address{Department of Mathematics \\  
University of Utah\\  
155 South 1400 East\\
JWB 233\\
Salt Lake City, UT 84112, USA}
\email{hacon@math.utah.edu}
\author{James M\textsuperscript{c}Kernan} 
\address{Department of Mathematics\\ 
University of California at Santa Barbara\\ 
Santa Barbara, CA 93106, USA} 
\email{mckernan@math.ucsb.edu}
\address{Department of Mathematics\\ 
MIT\\ 
77 Massachusetts Avenue\\
Cambridge, MA 02139, USA} 
\email{mckernan@math.mit.edu}

\thanks{The first author was partially supported by NSF research grant no: 0456363 and an
  AMS Centennial fellowship and the second author was partially supported by NSA grant no:
  H98230-06-1-0059 and NSF grant no: 0701101.  We would like to thank F. Ambro, C. Birkar,
  P. Cascini, J. A. Chen, A. Corti, O.  Fujino, S. Keel, \and J. Koll\'ar for valuable
  suggestions.}

\begin{abstract} Assuming finite generation in dimension $n-1$, we prove that pl-flips 
exist in dimension $n$.
\end{abstract}

\maketitle

\tableofcontents

\section{Introduction}
\label{s_int}

This is the second of two papers whose purpose is to establish:
\begin{theorem}\label{t_finite} The canonical ring
$$
R(X,K_X)=\bigoplus_{m\in\mathbb{N}}H^0(X,\ring X.(mK_X)),
$$
is finitely generated for every smooth projective variety $X$.  
\end{theorem}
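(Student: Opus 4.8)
The plan is to prove Theorem~\ref{t_finite} by induction on $n=\dim X$; the mechanism of the induction is that finite generation in dimension $n-1$ is precisely what is needed to construct pl-flips, and hence flips and minimal models, in dimension $n$. First I would reduce to the case of log general type. If $K_X$ is big this is automatic; otherwise, after a birational modification I may assume the Iitaka fibration $f\colon X\to Z$ is a morphism, and the Fujino--Mori canonical bundle formula identifies a truncation of $R(X,K_X)$ with an adjoint ring $R(Z,K_Z+\Delta)$ for a $\mathbb{Q}$-factorial klt pair $(Z,\Delta)$ with $K_Z+\Delta$ big and $\dim Z<n$, which is covered by the inductive hypothesis. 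Thus it suffices to prove: for every $\mathbb{Q}$-factorial klt pair $(X,\Delta)$ of dimension $n$ with $K_X+\Delta$ big, the ring $R(X,K_X+\Delta)$ is finitely generated.

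For such a pair it is enough to produce a minimal model $(X',\Delta')$: then $K_{X'}+\Delta'$ is nef and big, hence semiample by the base-point-free theorem, so $R(X',K_{X'}+\Delta')=R(X,K_X+\Delta)$ is finitely generated. I would construct $(X',\Delta')$ by running the $(K_X+\Delta)$-MMP with scaling of an ample divisor, which requires two inputs: that flips exist, and that the resulting sequence terminates. For termination one uses that, $K_X+\Delta$ being big, the flipping loci can be kept away from a fixed divisor; special termination then reduces termination to the minimal model program in dimensions $<n$, available by induction. For the existence of flips, Shokurov's reduction expresses an arbitrary flip in dimension $n$ as a composition of finitely many pl-flips in dimension $n$, granted the minimal model program in dimensions $<n$.

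It then remains to show that pl-flips exist in dimension $n$ --- the theorem actually established in this paper. A pl-flip exists precisely when the restricted algebra
\[
\mathcal{R}=\bigoplus_{m}\im\Bigl(H^0\bigl(X,\ring X.(m(K_X+\Delta))\bigr)\longrightarrow H^0\bigl(S,\ring S.(m(K_S+\Delta_S))\bigr)\Bigr)
\]
on the pl-contracted component $S$ is finitely generated. The strategy is to recognize $\mathcal{R}$ as an adjoint algebra on the $(n-1)$-fold $S$: after finitely many steps of the minimal model program on $S$ one exhibits $\mathcal{R}$ as a limit of the finitely generated rings $R(S,K_S+\Delta_i)$, and the extension theorem --- lifting pluri-log-canonical forms from $S$ to $X$ via multiplier ideals and Siu--Kawamata--Takayama-type asymptotic vanishing --- shows that the restriction maps are eventually surjective, so the limit is attained and inherits finite generation from the inductive hypothesis in dimension $n-1$.

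The base case $n\le 1$ is classical. I expect the main obstacle to be this last step: the perturbed boundaries $\Delta_i$ produced on $S$ are in general irrational, so a priori $\mathcal{R}$ is only a saturated, b-divisorial algebra, and proving that it descends from an honest finitely generated ring requires the full force of the extension theorem together with a Diophantine approximation argument and the finiteness of minimal models in dimension $n-1$.
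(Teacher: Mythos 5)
Your proposal is correct and matches the approach taken by the paper and its companion \cite{BCHM06}: reduce to the log-general-type case by the Iitaka fibration and canonical bundle formula, run the MMP with scaling to reach a log terminal model (using special termination and bigness to control termination), reduce general flips to pl-flips, and prove pl-flips exist by identifying the restricted algebra on $S$ with a log canonical ring $R(S,K_S+\Theta)$ via the extension theorem and a Diophantine argument for the rationality of $\Theta$. Note, though, that the present paper only establishes the last piece — pl-flips in dimension $n$ assuming finite generation, stability of the stable base locus, and boundedness of the fixed divisor in dimension $n-1$ (Theorem~\ref{t_ezd}$_{n-1}$ $\Rightarrow$ Theorem~\ref{t_existence}$_n$) — while the reduction to log general type, the MMP with scaling, termination, and the Shokurov-style reduction to pl-flips are all carried out in \cite{BCHM06} as part of an intertwined induction on several statements, not a single clean induction on $\dim X$.
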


Note that Siu has announced a proof of finite generation for varieties of general type,
using analytic methods, see \cite{Siu06}.  

Our proof relies on the ideas and techniques of the minimal model program and roughly
speaking in this paper we will show that finite generation in dimension $n-1$ implies the
existence of flips in dimension $n$.  More precisely, assuming the following:
\setcounter{theorema}{5}
\begin{theorema}\label{t_ezd} Let $\pi\colon\map X.Z.$ be a projective morphism to a 
normal affine variety.  Let $(X,\Delta=A+B)$ be a $\mathbb{Q}$-factorial kawamata log
terminal pair of dimension $n$, where $A\geq 0$ is an ample $\mathbb{Q}$-divisor and
$B\geq 0$.  If $K_X+\Delta$ is pseudo-effective, then
\begin{enumerate} 
\item The pair $(X,\Delta)$ has a log terminal model $\mu\colon\rmap X.Y.$.  
In particular if $K_X+\Delta$ is $\mathbb{Q}$-Cartier then the log canonical ring 
$$
R(X,K_X+\Delta)=\bigoplus_{m\in\mathbb{N}}H^0(X,\ring X.(\rdown m(K_X+\Delta).)),
$$
is finitely generated. 
\item Let $V\subset \WDiv _{\mathbb{R}}(X)$ be the vector space spanned by the components
of $\Delta$.  Then there is a constant $\delta>0$ such that if $G$ is a prime divisor
contained in the stable base locus of $K_X+\Delta$ and $\Xi\in \mathcal L _{A}(V)$ such
that $\|\Xi-\Delta\|<\delta$, then $G$ is contained in the stable base locus of $K_X+\Xi$.
\item Let $W\subset V$ be the smallest affine subspace of $\WDiv _{\mathbb{R}}(X)$
containing $\Delta$, which is defined over the rationals.  Then there is a constant
$\eta>0$ and a positive integer $r>0$ such that if $\Xi\in W$ is any divisor and $k$ is
any positive integer such that $\|\Xi-\Delta\|<\eta$ and $k(K_X+\Xi)/r$ is Cartier, then
every component of $\fix (k(K_X+\Xi))$ is a component of the stable base locus of
$K_X+\Delta$.
\end{enumerate} 
\end{theorema}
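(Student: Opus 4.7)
The plan is to prove all three parts together with a package of complementary statements---existence of pl-flips, a non-vanishing theorem, and finiteness of log terminal models---in a simultaneous induction on the dimension $n$. The standing hypothesis of finite generation in dimension $n-1$ enters by supplying pl-flips in dimension $n$ through the usual restriction argument: the algebra restricted to a component of $\rdown\Delta.$ is a log canonical ring in dimension $n-1$, hence finitely generated by hypothesis, and the pl-flip is recovered as $\Proj$ of this restricted algebra.

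For part (1), after a log resolution and a perturbation permitted by the ampleness of $A$, we may assume $(X,\Delta)$ has simple normal crossing support and that a general divisor $A_0 \le A$ cuts out all the centers that will arise. We then run a $(K_X+\Delta)$-MMP with scaling of an ample divisor $H$: fix $t_0 \gg 0$ so that $K_X+\Delta+t_0 H$ is nef and decrease $t$, performing successive extremal divisorial contractions and flips. Flips exist through the pl-flip construction combined with the standard reduction of general flips to pl-flips. The output is the log terminal model $\mu\colon\rmap X.Y.$, and since $K_Y+\mu_*\Delta$ is semiample the log canonical ring is finitely generated. The main obstacle here---and the principal technical difficulty of the entire theorem---is termination of this MMP with scaling: divisorial steps terminate for Picard-rank reasons, but termination of the ensuing flips requires both special termination on the lc strata (applying Theorem B inductively in dimension $n-1$) and a global finiteness statement for log terminal models along the one-parameter family $\{K_X+\Delta+tH\}$, which itself is part of the simultaneous induction and so must be established in parallel with (1) rather than afterwards.

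Parts (2) and (3) are complementary base-locus semicontinuity statements, both consequences of finiteness of log terminal models on the rational polytope $\mathcal L_A(V)$. Once (1) is available for every $\Xi\in\mathcal L_A(V)$, the finite collection of resulting birational contractions induces a locally polyhedral chamber decomposition on whose open cells the stable base locus is constant. For (2), choosing $\delta$ small enough that the $\delta$-ball around $\Delta$ meets only chambers whose closure contains $\Delta$ forces $\SBs(K_X+\Delta)\subseteq \SBs(K_X+\Xi)$ for $\|\Xi-\Delta\|<\delta$. Part (3) provides the reverse inclusion restricted to the rational affine subspace $W$: a Diophantine approximation argument on $W$, combined with the local chamber structure near $\Delta$, bounds the perturbations $\Xi\in W$ for which $k(K_X+\Xi)/r$ is Cartier, and forces every divisorial component of $\fix(k(K_X+\Xi))$ to already lie in $\SBs(K_X+\Delta)$.
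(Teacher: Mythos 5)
The first thing to say is that this paper contains no proof of Theorem F: it is the standing hypothesis of the whole paper, assumed in dimension $n-1$, and the only thing proved here is the implication that Theorem F$_{n-1}$ implies the existence of pl-flips in dimension $n$, i.e.\ \eqref{t_m}. The proof of Theorem F itself lives in the companion paper \cite{BCHM06}, so there is no argument in this text to compare yours against. What you have written is a summary of the global induction of \cite{BCHM06}, and at that level it is broadly accurate: pl-flips enter via restriction to $S=\rdown\Delta.$ and finite generation in dimension $n-1$; part (1) is obtained by running the $(K_X+\Delta)$-MMP with scaling of an ample divisor; and parts (2) and (3) are read off from finiteness of log terminal models over the polytope $\mathcal L_A(V)$.

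As a proof, however, the proposal defers exactly the points where the content lies. First, non-vanishing (that $K_X+\Delta$ pseudo-effective with $\Delta$ big implies $K_X+\Delta\sim_{\mathbb R}D$ for some $D\geq 0$) is mentioned once as part of the ``package'' and never established or used again; yet without it the MMP with scaling cannot be shown to produce a log terminal model, since one needs an effective representative to arrange that the relevant flipping loci lie inside $\rdown\Delta.$, which is what reduces the needed flips to pl-flips. Second, you correctly observe that termination of the scaling rests on finiteness of models, which must be proved ``in parallel,'' but you do not say how the apparent circularity is broken; in \cite{BCHM06} this is handled by a separate induction on the dimension of the affine span of the boundary together with compactness of $\mathcal L_A(V)$, and without some such device the argument as written is formally circular. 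Third, for part (3) the chamber decomposition alone does not produce the integer $r$ nor control $\fix(k(K_X+\Xi))$ for genuine integral multiples: one needs a single birational contraction that is simultaneously a log terminal model for all rational $\Xi\in W$ near $\Delta$, semiampleness of the pushforward there, and an effective base-point-freeness statement to extract $r$. So your text is a correct road map of the companion paper's proof, but it is not yet a proof.
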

we prove the existence of pl-flips:
\setcounter{theorema}{0}
\begin{theorema}\label{t_existence} Pl-flips exist in dimension $n$.  
\end{theorema}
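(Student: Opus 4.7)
The plan is to follow the standard Shokurov reduction: finite generation of the relative log canonical algebra $R = \bigoplus_m f_*\mathcal{O}_X(\lfloor m(K_X+\Delta)\rfloor)$ for a pl-flipping contraction $f\colon X\to Z$ (with $(X,\Delta=S+B)$ plt, $S=\lfloor\Delta\rfloor$ a prime divisor, both $-(K_X+\Delta)$ and $-S$ being $f$-ample) is equivalent to finite generation of a restricted algebra living on $S$. Since $\dim S = n-1$, the strategy is to identify that restricted algebra with a log canonical ring to which Theorem B in dimension $n-1$ applies.

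First I would reduce to $Z$ affine (the problem is local over $Z$) and apply the reduction from our earlier paper: using that $S$ is irreducible and $-S$ is $f$-ample, finite generation of $R$ is equivalent to finite generation of the restricted algebra
\[
R_S \;=\; \bigoplus_{m} \mathrm{Im}\!\left(H^0(X,\, m(K_X+\Delta)) \longrightarrow H^0(S,\, m(K_X+\Delta)|_S)\right)
\]
(for $m$ sufficiently divisible so that $m(K_X+\Delta)$ is Cartier in a neighbourhood of the generic point of $S$). The crux is then to identify $R_S$, up to a bounded modification, with a genuine klt log canonical ring $R(S, K_S + A_S + B_S)$ where $A_S\geq 0$ is ample and $B_S\geq 0$, so that Theorem B(1) in dimension $n-1$ can be invoked. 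Adjunction yields a boundary $\Delta_S$ with $K_S+\Delta_S = (K_X+\Delta)|_S$, but the natural containment $R_S \hookrightarrow R(S, K_S+\Delta_S)$ is typically strict, since sections on $S$ need not lift to $X$. I would close this gap by combining an extension theorem of Nadel--Siu type (to lift enough sections from $S$ to $X$) with Diophantine approximation in the rational affine hull $W$ of $\Delta$, using Theorem B(2)--(3) in dimension $n-1$ to guarantee that as the boundary is perturbed on $S$ the components of the stable base locus remain stable. This allows $R_S$ to be matched, after truncation and rescaling, with the log canonical ring of a rational klt boundary of the form $A_S + B_S$ on $S$, at which point Theorem B(1) in dimension $n-1$ yields finite generation of $R_S$, hence of $R$, hence the pl-flip exists.

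The main obstacle is precisely this identification step. The restricted algebra is produced as a convex/limit object built from the images of restriction maps, not as a $\mathbb{Z}$-graded log canonical ring attached to a single pair, and the failure of the restriction map to be surjective is genuine. Controlling it requires both a sharp extension theorem for sections of $m(K_X+\Delta)$ across the plt divisor $S$, and a careful use of Theorem B(2)--(3) to ensure that small perturbations of the boundary in $W$ do not create or destroy components of the stable base locus; only with that stability in hand can one identify the saturation of $R_S$ with an honest log canonical ring of a klt pair with ample boundary in dimension $n-1$. The passage between $\mathbb{R}$-coefficients (intrinsic to $R_S$) and $\mathbb{Q}$-coefficients (required to apply Theorem B) is the technical heart of the argument and will demand a careful Diophantine approximation of $\Delta$ by rational boundaries in $W$.
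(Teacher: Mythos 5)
Your outline follows the paper's proof essentially verbatim: reduction to the restricted algebra via Shokurov's equivalence, a Siu-type extension theorem through multiplier ideals to control the failure of surjectivity of restriction, and Diophantine approximation in the rational affine hull together with parts (2)--(3) of the dimension-$(n-1)$ hypothesis to prove the limiting boundary $\Theta$ on $S$ is rational, after which part (1) gives finite generation. You have correctly identified both the structure and the genuine difficulties (the lifting theorem and the rationality of $\Theta$), which is exactly where the paper's technical work, namely \eqref{t_squeeze}, \eqref{t_multiplier}, \eqref{t_lift} and \eqref{t_rational}, is concentrated.
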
 
that is, we prove:
\begin{theorem}\label{t_m}  Theorem~\ref{t_ezd}$_{n-1}$ implies Theorem~\ref{t_existence}$_n$.
\end{theorem}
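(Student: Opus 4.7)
The plan is to follow Shokurov's reduction of pl-flips to a finite generation problem on the plt component $S=\lfloor\Delta\rfloor$, and to convert that problem, via a careful birational modification, into an instance of Theorem~\ref{t_ezd}$_{n-1}$(1).

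Let $f\colon X\to Z$ denote the pl-flipping contraction, with $(X,\Delta)$ plt, $-S$ and $-(K_X+\Delta)$ both $f$-ample, and $\rho(X/Z)=1$. The flip exists if and only if the relative log canonical algebra $\mathfrak R=\bigoplus_{m\ge 0}f_*\mathcal O_X(\lfloor m(K_X+\Delta)\rfloor)$ is a finitely generated $\mathcal O_Z$-algebra. Because $-S$ is $f$-ample and $S$ is the unique component of $\lfloor\Delta\rfloor$, a standard argument reduces this to finite generation of the \emph{restricted algebra}
$$
\mathfrak R_S=\bigoplus_m \mathrm{Im}\bigl(H^0(X,\mathcal O_X(\lfloor m(K_X+\Delta)\rfloor))\longrightarrow H^0(S,\mathcal O_S(\lfloor m(K_X+\Delta)\rfloor|_S))\bigr).
$$
By plt adjunction, $(K_X+\Delta)|_S=K_S+\Delta_S$ with $(S,\Delta_S)$ klt, so $\mathfrak R_S$ sits as a graded subalgebra of $R(S,K_S+\Delta_S)$.

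Next, I would pass to a log resolution $\pi\colon Y\to X$ on which the moving part of $|m\pi^*(K_X+\Delta)|$ is base-point free for $m$ in a cofinal set, and let $T\subset Y$ be the strict transform of $S$. Writing $m\pi^*(K_X+\Delta)\sim M_m+F_m$ with $|M_m|$ free, $\mathfrak R_S$ is generated in degree $m$ by $H^0(T,M_m|_T)$, and the sequence $M_m|_T/m$ converges, after passing to a subsequence, to some $\mathbb R$-divisor on $T$ supported on a fixed SNC divisor. The aim is to produce a single klt pair $(T,\Theta)$ with $\Theta=A+B$, $A$ ample, $B\ge 0$, and $K_T+\Theta$ pseudo-effective, such that $R(T,K_T+\Theta)$ agrees up to truncation with $\mathfrak R_S$ (the ample perturbation is obtained by moving a small piece of an ample divisor on $X$ into the boundary). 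Once this is established, Theorem~\ref{t_ezd}$_{n-1}$(1) applies to the $(n-1)$-dimensional klt pair $(T,\Theta)$ and gives finite generation of $R(T,K_T+\Theta)$, hence of $\mathfrak R_S$ and $\mathfrak R$, producing the pl-flip.

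The heart of the proof, and its main obstacle, is the construction of $(T,\Theta)$ together with the identification $R(T,K_T+\Theta)\cong\mathfrak R_S$ up to truncation. One inclusion comes from the limit construction above; the reverse requires an extension theorem of Siu--Kawamata type lifting sections of $\lfloor m(K_T+\Theta)\rfloor$ to sections of $m(K_X+\Delta)$ on $X$. To make such lifting work one needs the asymptotic fixed divisor of $|m(K_T+\Theta)|$, and more generally of $|k(K_T+\Xi)|$ for $\Xi$ close to $\Theta$ in the affine span of the components of $\Theta$, to be uniformly controlled. This is exactly the content of parts~(2) and~(3) of Theorem~\ref{t_ezd}$_{n-1}$ applied on $T$: part~(2) guarantees that small perturbations of the boundary do not remove any component from the stable base locus, and part~(3) forces every component of $\fix(k(K_T+\Xi))$ to be a component of the stable base locus of $K_T+\Theta$. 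These two stability statements, combined with the convergence of $M_m|_T/m$, let one pin down the boundary $\Theta$ and identify the two algebras, closing the argument.
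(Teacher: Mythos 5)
Your proposal follows the paper's own proof essentially step for step: Shokurov's reduction to the restricted algebra, identification of that algebra (up to truncation) with $R(T,K_T+\Theta)$ for a klt pair with big boundary on a birational model of $S$ via a Siu--Kawamata type extension theorem, with parts (2) and (3) of Theorem~\ref{t_ezd}$_{n-1}$ controlling stable base loci under perturbation of the boundary and part (1) supplying the final finite generation. The one point you gloss over is that the crux of "pinning down $\Theta$" is the \emph{rationality} of the limiting boundary $\Theta=\Omega-\Omega\wedge\lim F_{m!}$, which the paper extracts from (2) and (3) by Diophantine approximation combined with the $\epsilon/k$-overshoot built into the lifting theorem, and which is what turns the a priori $\mathbb{R}$-divisor limit into an honest $\mathbb{Q}$-divisor to which part (1) can be applied.
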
 

With the results of \cite{BCHM06}, \eqref{t_m} completes the proof of \eqref{t_finite}.

The main ideas used in this paper have their origins in the work of Shokurov on the
existence of flips \cite{Shokurov03} together with the use of the extension theorem of
\cite{HM05b} which in turn was inspired by the work of Kawamata, Siu and Tsuji (cf.
\cite{Kawamata99}, \cite{Siu98} and \cite{Tsuji99}).  For further history about the
details of this problem see \cite[\S 2.1]{Corti05}.

In this paper, however we do not make use of the concept of \lq\lq asymptotic
saturation\rq\rq\ introduced by Shokurov, and in fact we prove a more general result which
does not require the relative weak log Fano condition (see also \cite{Ambro06}).

Further treatments of the results of this paper may be found in \cite{Ambro06} and
\cite{HM07} (which follows Shokurov's approach more explicitly).

We now turn to a more detailed description of the results and techniques used in this
paper.  Recall the following:
\begin{definition}\label{d_pl} Let $(X,\Delta )$ be a purely log terminal pair and 
$f\colon\map X.Z.$ be a projective morphism of normal varieties.  Then $f$ is a
\textbf{pl-flipping contraction} if $\Delta $ is a $\mathbb{Q}$-divisor and
\begin{enumerate}
\item $f$ is small, of relative Picard number one, 
\item $-(K_X+\Delta)$ is $f$-ample,
\item $X$ is $\mathbb{Q}$-factorial, 
\item $S=\rdown\Delta.$ is irreducible and $-S$ is $f$-ample. 
\end{enumerate}

The \textbf{flip} of a pl-flipping contraction $f\colon\map X.Z.$ is a small projective
morphism $g\colon\map Y.Z.$ of relative Picard number one, such that $K_Y+\Gamma$ is
$g$-ample, where $\Gamma$ is the strict transform of $\Delta$.
\end{definition}

The flip $g$ is unique, if it exists at all, and it is given by
$$
Y=\Proj_Z \mathfrak{R} \qquad \text{where} \qquad \mathfrak{R}=\bigoplus _{m\in \mathbb{N}\,:\,k|m}f_*\ring X.(m(K_X+\Delta)),
$$
and $k$ is any positive integer such that $k(K_X+\Delta)$ is integral.  Therefore, in
order to prove the existence of pl-flips, it suffices to show that $\mathfrak{R}$ is a
finitely generated $\ring Z.$-algebra.  Since this problem is local over $Z$, we may
assume that $Z=\sp A$ is affine and it suffices to prove that
$$
R(X,k(K_X+\Delta))=\bigoplus _{m\in \mathbb{N}\,:\,k|m}H^0(X,\ring X.(m(K_X+\Delta))),
$$
is a finitely generated $A$-algebra.  It is then natural to consider the restricted
algebra
$$
R_S(X,k(K_X+\Delta))=\im\left(\map R(X,k(K_X+\Delta)).R(S,k(K_S+\Omega)).\right),
$$
whose graded pieces correspond to the images of the restriction homomorphisms
$$
\map {H^0(X,\ring X.(m(K_X+\Delta)))}.{H^0(S,\ring S.(m(K_S+\Omega)))}.,
$$
where $m=kl$ is divisible by $k$ and $\Omega$ is defined by the adjunction formula 
$$
(K_X+\Delta)|_S=K_S+\Omega,
$$
and $k(K_X+\Delta)$ is Cartier.  Shokurov has shown, cf. \eqref{t_restricted}, that the
algebra $R(X,k(K_X+\Delta))$ is finitely generated if and only if the restricted algebra
is finitely generated.

Now, if the natural inclusion 
$$
R_S(X,k(K_X+\Delta))\subset R(S,k(K_S+\Omega)),
$$ 
were an isomorphism, then \eqref{t_m} would follow from (1) of
Theorem~\ref{t_ezd}$_{n-1}$.  In fact the pair $(S,\Omega)$ is kawamata log terminal,
$\dim S=\dim X-1=n-1$ and since $f|_S$ is birational, $\Omega$ is automatically big so
that, by a standard argument, (1) of Theorem~\ref{t_ezd}$_{n-1}$ applies and
$R(S,k(K_S+\Omega))$ is finitely generated.  \eqref{t_restricted} also implies
$\mathfrak{R}$ is finitely generated.

Unluckily this is too much to hope for.  However it does suggest that one should
concentrate on the problem of lifting sections and the main focus of this paper is to
prove the extension result \eqref{t_lift}.  In fact \eqref{t_m} is a straightforward
consequence of \eqref{t_lift}.

To fix ideas, let us start with an example where we cannot lift sections.  Let $X$ be the
blow up of $\pr 2.$ at a point $o$, with exceptional divisor $E$.  Let $S$ be the strict
transform of a line through $o$, let $L_1$, $L_2$ and $L_3$ be the strict transforms of
general lines in $\pr 2.$, let $p=E\cap S$ and let $p_i=L_i\cap S$.  Then the pair
$$
(X,\Delta =S+(2/3)(E+L_1+L_2+L_3)),
$$ 
is purely log terminal but the homomorphism
\[
\map {H^0(\ring X.(3l(K_X+\Delta )))}.{H^0(\ring S.(3l(K_S+\Omega )))}.\simeq H^0 (\ring {\pr 1.}.(2l)),
\]
is never surjective, where $\Omega=(\Delta-S)|_S=2/3(p+p_1+p_2+p_3)$ and $l$ is a positive
integer.  The problem is that the stable base locus of $K_X+\Delta$ contains $E$ and yet
$|3(K_S+\Omega)|$ is base point free.  Notice, however, that
$$
|3l(K_X+\Delta )|_S=|3l(K_S+\Theta )|+3l(\Omega-\Theta),
$$
where $\Theta=(2/3)(p_1+p_2+p_3)$ is obtained from $\Omega$ by throwing away $p$.  In
other words, $\Theta$ is obtained from $\Omega$ by removing some part of each components
contained in the stable base locus of $K_X+\Delta$.

Returning to the general setting, one may then hope that the restricted algebra
$R_S(S,l(K_X+\Delta))$ is given by an algebra of the form $R(S,l(K_S+\Theta))$ for some
kawamata log terminal pair $(S,\Theta)$ where $0\leq\Theta\leq\Omega$ is a
$\mathbb{Q}$-divisor obtained from $\Omega$ by subtracting components of $\Omega$
contained in the stable base locus of $K_X+\Delta$.  We will now explain how this may be
achieved.  The tricky thing is to determine exactly how much of the stable base locus to
throw away.

It is not hard to reduce to the following situation: $\pi\colon\map X.Z.$ is a projective
morphism to a normal affine variety $Z$, where $(X,\Delta=S+A+B)$ is a purely log terminal
pair of dimension $n$, $S=\rdown\Delta.$ is irreducible, $X$ and $S$ are smooth, $A\geq 0$
is an ample $\mathbb{Q}$-divisor, $B\geq 0$, $(S,\Omega=(\Delta-S)|_S)$ is canonical and
the stable base locus of $K_X+\Delta$ does not contain $S$.  

Let 
$$
\Theta _m=\Omega -\Omega \wedge F_m\qquad \text{where}\qquad F_m=\fix (|m(K_X+\Delta)|_S)/m,
$$
and $m(K_X+\Delta)$ is Cartier.  Then $m(\Omega-\Theta_m)$ is the biggest divisor
contained in $\fix(|m(K_X+\Delta)|_S)$ such that $0\leq\Theta_m\leq\Omega$.
It follows that
\[
\label{e_sup} |m(K_S+\Theta _m)|+m(\Omega-\Theta_m)\supset |m(K_X+\Delta)|_S. \tag{$\supset$}
\] 
A simple consequence of the main lifting result \eqref{t_lift} of this paper implies that
this tautological inclusion \eqref{e_sup} is actually an equality,
\[
\label{e_eq} |m(K_S+\Theta_m)|+m(\Omega-\Theta_m)=|m(K_X+\Delta)|_S. \tag{$=$}
\] 
A technical, but significant, improvement on the proof of the existence of flips which
appears in \cite{HM07} is that the statement of \eqref{e_eq} and of \eqref{t_lift}
involves only linear systems and divisors on $X$, even though the proof of \eqref{t_lift}
involves passing to a higher model.  The key point is that since $(S,\Omega)$ is
canonical, it suffices to keep track only of the fixed divisor on $S$ and not of the whole
base locus.

To prove \eqref{e_eq} we use the method of multiplier ideal sheaves.  In fact the main
point is to establish an inclusion of multiplier ideal sheaves, \eqref{t_multiplier}.  A
proof of \eqref{t_multiplier} appeared originally in \cite{HM05b}.  We chose to include a
proof of this result for the convenience of the reader and we decided to use notation
closer to the well established notation used in \cite{Lazarsfeld04b}.  Note however that
the multiplier ideal sheaves we use, see \eqref{d_variant}, must take into account the
divisor $\Delta$ (for example consider the case worked out above) and the fact that
$(S,\Omega)$ is canonical.

In fact \eqref{e_eq} follows from the MMP.  Indeed, if one runs $f\colon\rmap X.Y.$ the
$(K_X+\Delta)$-MMP, almost by definition this will not change the linear systems
$|m(K_X+\Delta)|$.  Since $K_Y+\Gamma=K_X+f_*\Delta$ is nef, one can lift sections on $Y$
from the strict transform $T$ of $S$, by an easy application of Kawamata-Viehweg
vanishing.  In general, however, the linear systems $|m(K_T+g_*\Theta)|$ are bigger than
the linear systems $|m(K_S+\Theta)|$, since the induced birational map $g\colon\rmap S.T.$
might extract some divisors.  However any such divisor must have log discrepancy at most
one, so this cannot happen, almost by definition, if $K_S+\Theta$ is canonical.

In order to establish that $R_S(X,k(K_X+\Delta))$ is finitely generated, cf.
\eqref{t_rational}, and thereby to finish the proof of \eqref{t_m}, it is necessary and
sufficient to show that $\Theta=\lim (\Theta _{m!}/m!)$ is rational (the seemingly strange
use of factorials is so that we can use limits rather than limsups).  At this point we
play off two facts.  The first is that since we are assuming that Theorem~\ref{t_ezd} holds on
$S$, if $m>0$ is sufficiently divisible and $\Phi$ is an appropriately chosen $\mathbb
Q$-divisor sufficiently close to $\Theta$, then the base locus of $|m(K_S+\Phi)|$ and the
stable base locus of $K_S+\Theta$ are essentially the same (basically because $K_S+\Theta$
and $K_S+\Phi$ share a log terminal model $\mu\colon\rmap S.S'.$ and these two sets of
divisors are precisely the divisors contracted by $\mu$).  The second is that using
\eqref{t_squeeze}, \eqref{t_lift} is slightly stronger than \eqref{e_eq}; one is allowed
to overshoot $\Theta_m$ by an amount $\epsilon/m$, where $\epsilon>0$ is fixed.  (It seems
worth pointing out that \eqref{t_squeeze} seems to us a little mysterious.  In particular,
unlike \eqref{t_lift}, we were unable to show that this result follows from the MMP.)

More precisely, since the base locus of $|m(K_S+\Theta_m)|$ contains no components of
$\Theta_m$, by (2) of Theorem~\ref{t_ezd} it follows that the stable base locus of $K_S+\Theta$
contains no components of $\Theta$.  If $\Theta$ is not rational, then by Diophantine
approximation there is a $\mathbb{Q}$-divisor $0\leq \Phi\leq \Omega$ very close to
$\Theta$ and an integer $k>0$ such that $k\Phi$ is integral and
$\mult_G\Phi>\mult_G\Theta$, for some prime divisor $G$.  By \eqref{t_lift}, it actually
follows that
\[
|k(K_S+\Phi)|+k(\Omega-\Phi)=|k(K_X+\Delta )|_S.
\]
The condition $\mult_G\Phi>\mult_G\Theta$ ensures that $G$ is a component of $\fix
(k(K_S+\Phi))$, and hence of the stable base locus of $K_S+\Phi$.  But then $G$ is a
component of $\Theta$ and of the stable base locus of $\Theta$.  This is the required
contradiction.

\section{Notation and conventions}
\label{s_notation}

We work over the field of complex numbers $\mathbb{C}$.  Let $X$ be a normal variety.  A
$$
\left.
\begin{array}{r}
\text{\textit{(integral) divisor}} \\
\text{\textit{$\mathbb{Q}$-divisor}} \\
\text{\textit{$\mathbb{R}$-divisor}} \\
\end{array}
\right\}
\quad \text{is a}
\quad
\left \{
\begin{array}{l}
\text{$\mathbb{Z}$-linear}\\
\text{$\mathbb{Q}$-linear}\\
\text{$\mathbb{R}$-linear,}\\
\end{array}
\right.
$$
combination of prime divisors.  Given an integral Weil divisor $D$, we let
$$
R(X,D)=\bigoplus _{m\in\mathbb{N}} H^0(X,\ring X.(mD)).  
$$ 
Set
\begin{align*} 
\WDiv_{\mathbb{Q}}(X)&=\ten \WDiv(X).\mathbb{Z}.\mathbb{Q}. \\ 
\WDiv_{\mathbb{R}}(X)&=\ten \WDiv (X).\mathbb{Z}.\mathbb{R}.,
\end{align*} 
where $\WDiv (X)$ is the group of Weil divisors on $X$.  The definitions below for
$\mathbb{R}$-divisors reduce to the usual definitions for $\mathbb{Q}$-divisors and
integral divisors, see \cite{BCHM06}.  Note that the group of $\mathbb{R}$-divisors forms
a vector space, with a canonical basis given by the prime divisors.  If $C=\sum c_iB_i$
and $D=\sum d_iB_i$, where $B_i$ are distinct prime divisors, then we write $D\geq 0$ if $d_i\geq
0$ and we will denote by
\begin{align*} 
\|C\|   &= \max_i c_i             & C\wedge D  &= \sum_i\min\{c_i,d_i\}B_i \\
\rdown C. &= \sum _i \rdown c_i. B_i  & \{C\}      &= C-\rdown C. . 
\end{align*}

Two $\mathbb{R}$-divisors $C$ and $D$ are 
$$
\left.
\begin{array}{rl}
\text{linearly equivalent,} & C\sim D\\
\text{$\mathbb{Q}$-linearly equivalent,} & C\sim_{\mathbb{Q}}D\\
\text{$\mathbb{R}$-linearly equivalent,} & C\sim_{\mathbb{R}}D\\
\end{array}
\right\}
\quad \text{if $C-D$ is a}
\quad
\left \{
\begin{array}{l}
\text{$\mathbb{Z}$-linear}\\
\text{$\mathbb{Q}$-linear}\\
\text{$\mathbb{R}$-linear,}\\
\end{array}
\right.
$$
combination of principal divisors.  Note that if $C\sim_{\mathbb{Q}}D$ then $mC\sim mD$
for some positive integer $m$, but this fails in general for $\mathbb{R}$-linear
equivalence.  Note also that if two $\mathbb{Q}$-divisors are $\mathbb{R}$-linearly
equivalent then they are in fact $\mathbb{Q}$-linearly equivalent, but that two integral
divisors might be $\mathbb{Q}$-linearly equivalent without being linearly equivalent.  Let
\begin{align*} 
|D|&=\{\,C\in \WDiv (X)\,|\, C\geq 0\, ,\, C \sim D \,\} \\ 
|D|_{\mathbb{Q}}&=\{\,C\in \WDiv _{\mathbb{Q}}(X)\,|\, C\geq 0\, ,\, C \sim_{\mathbb{Q}} D \,\} \\ 
|D|_{\mathbb{R}}&=\{\,C\in \WDiv_{\mathbb{R}} (X)\,|\, C\geq 0\, ,\, C \sim_{\mathbb{R}} D \,\}.
\end{align*} 
If $T$ is a subvariety of $X$, not contained in the base locus of $|D|$, then $|D|_T$
denotes the image of the linear system $|D|$ under restriction to $T$.  If $D$ is an
integral divisor, $\fix(D)$ denotes the fixed divisor of $D$ so that
$|D|=|D-\fix(D)|+\fix(D)$ where the base locus of $|D-\fix (D)|$ contains no divisors.
More generally $\fix(V)$ denotes the fixed divisor of the linear system $V$.

The \textit{stable base locus} of $D$, denoted by $\mathbf{B}(D)$, is the intersection of
the support of the elements of $|D|_{\mathbb{R}}$ (if $|D|_{\mathbb{R}}$ is empty then by convention the
stable base locus is the whole of $X$).  The \textit{stable fixed divisor} is the
divisorial support of the stable base locus.  The \textit{augmented stable base locus} of
$D$, denoted by $\mathbf{B}_+(D)$, is given by the stable base locus of $D-\epsilon A$ for
some ample divisor $A$ and any rational number $0<\epsilon \ll 1$.  The \textit{diminished
  stable base locus} is defined by
$$
\mathbf{B}_-(D)=\bigcup_{\epsilon>0} \mathbf{B}(D+\epsilon A).
$$
In particular we have
$$
\mathbf{B}_-(D)\subset \mathbf{B}(D)\subset \mathbf{B}_+(D).
$$

An \textit{$\mathbb{R}$-Cartier divisor} $D$ is an $\mathbb{R}$-linear combination of
Cartier divisors.  An $\mathbb{R}$-Cartier divisor $D$ is \textit{nef} if $D\cdot
\Sigma\geq 0$ for any curve $\Sigma\subset X$.  An $\mathbb{R}$-Cartier divisor $D$ is
\textit{ample} if it is $\mathbb{R}$-linearly equivalent to a positive linear combination
of ample divisors (in the usual sense).  An $\mathbb{R}$-Cartier divisor $D$ is
\textit{big} if $D \sim_{\mathbb{R}} A+B$, where $A$ is ample and $B\geq 0$.  A
$\mathbb{Q}$-Cartier divisor $D$ is a \textit{general ample $\mathbb{Q}$-divisor} if there
is an integer $m>0$ such that $mD$ is very ample and $mD\in |mD|$ is very general.

A \textit{log pair} $(X,\Delta)$ is a normal variety $X$ and $\mathbb{R}$-Weil divisor
$\Delta\geq 0$ such that $K_X+\Delta$ is $\mathbb{R}$-Cartier.  We say that a log pair
$(X,\Delta)$ is \textit{log smooth}, if $X$ is smooth and the support of $\Delta$ is a
divisor with global normal crossings.  A projective birational morphism $g\colon\map Y.X.$
is a \textit{log resolution} of the pair $(X,\Delta )$ if $X$ is smooth and the inverse
image of $\Delta$ union the exceptional locus is a divisor with global normal crossings.
Note that in the definition of log resolution we place no requirement that the
indeterminacy locus of $g$ is contained in the locus where the pair $(X,\Delta)$ is not
log smooth.  If $V$ is a linear system on $X$, a \textit{log resolution of $V$ and
  $(X,\Delta)$} is a log resolution of the pair $(X,\Delta)$ such that if $|M|+F$ is the
decomposition of $g^*V$ into its mobile and fixed parts, then $|M|$ is base point free and
$F$ union the exceptional locus union the strict transform of $\Delta$ is a divisor with
simple normal crossings support.  If $g$ is a log resolution, then we may write
$$
K_Y+\Gamma=g^*(K_X+\Delta)+E,
$$
where $\Gamma\geq 0$ and $E\geq 0$ have no common components, $g_*\Gamma =\Delta$ and $E$
is $g$-exceptional.  Note that this decomposition is unique.  The \textit{log discrepancy}
of a divisor $F$ over $X$ 
$$
a(X,\Delta,F)=1+\mult_F(E-\Gamma).
$$
Note that with this definition, a component $F$ of $\Delta$ with coefficient $b$ has log
discrepancy $1-b$.  The log discrepancy does not depend on the choice of model $Y$, so
that the log discrepancy is also a function defined on valuations.  A \textit{log
  canonical place} is any valuation of log discrepancy at most zero and the centre of a
log canonical place is called a \textit{log canonical centre}.  Note that every divisor on
$X$ is by definition a canonical centre, so the only interesting canonical centres are of
codimension at least two.

The pair $(X,\Delta)$ is \textit{kawamata log terminal} if there are no log canonical
centres.  We say that the pair $(X,\Delta)$ is \textit{purely log terminal} (respectively
\textit{canonical} or \textit{terminal}) if the log discrepancy of any exceptional divisor
is greater than zero (respectively at least one or greater than one).  We say that the
pair is \textit{divisorially log terminal} if there is a log resolution $g\colon\map Y.X.$
such that all exceptional divisors $E\subset Y$ have log discrepancy greater than zero.

\section{Preliminary results}
\label{s_preliminary}
 
In this section we recall several results about finitely generated algebras and in
particular we will give a proof of Shokurov's result that the pl-flip exists if and only
if the restricted algebra is finitely generated.

\begin{definition} Let $X$ be a normal variety, $S$ be a prime divisor and $B$ an integral
Weil divisor which is $\mathbb Q$-Cartier and whose support does not contain $S$.  The
\textbf{restricted algebra} $R_S(X,B)$ is the image of the homomorphism $\map
R(X,B).R(S,B|_S).$.
\end{definition}

We remark that as $B$ is $\mathbb Q$-Cartier then $B|_{S}$ is a well defined
$\mathbb{Q}$-Cartier divisor on $S$.

\begin{theorem}\label{t_restricted} Let $f\colon\map X.Z.$ be a pl-flipping contraction 
with respect to $(X,\Delta)$.  Pick an integer $k$ such that $k(K_X+\Delta)$ is Cartier.

 Then 
\begin{enumerate} 
\item The flip of $f$ exists if and only if the flip of $f$ exists locally over $Z$.  
\item If $Z=\sp A$ is affine then the flip $f^{+}\colon\map X^+.Z.$ exists if and only if
the restricted algebra $R_S(X,k(K_X+\Delta))$ is a finitely generated $A$-algebra.
\end{enumerate} 
\end{theorem}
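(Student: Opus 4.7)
The flip of $f$, if it exists, is $X^+ = \Proj_Z \mathfrak{R}$ where $\mathfrak{R} = \bigoplus_{m \in \mathbb{N},\, k\mid m} f_* \ring X.(m(K_X+\Delta))$. Existence is therefore equivalent to finite generation of $\mathfrak{R}$ as a sheaf of $\ring Z.$-algebras, a condition local on $Z$; covering $Z$ by affine opens and testing on each yields part~(1). For part~(2), assume $Z = \sp A$; then $\mathfrak{R}$ is finitely generated as a sheaf of algebras iff its algebra of global sections $R := R(X, k(K_X+\Delta))$ is a finitely generated $A$-algebra, so I must show $R$ is f.g.\ iff $R_S$ is. The direction ``$R$ f.g.\ $\Rightarrow R_S$ f.g.'' is immediate, since by definition $R_S$ is a graded quotient of $R$.

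\textbf{The converse} exploits the pl-hypothesis. Since $X$ is $\mathbb{Q}$-factorial, $S$ is $\mathbb{Q}$-Cartier; and since $-S$ and $-(K_X+\Delta)$ are both $f$-ample with $\rho(X/Z) = 1$, the classes of $S$ and $k(K_X+\Delta)$ are proportional over $Z$. Exploiting the affineness of $Z$ (to absorb pullbacks from the base), I would produce positive integers $p, q$ and a section $\sigma \in H^0(X, qk(K_X+\Delta))$ whose divisor of zeros is exactly $pS$. Introduce the descending filtration $R^{(j)}_m := H^0(X, mk(K_X+\Delta) - jS)$ of $R$ by homogeneous ideals. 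Multiplication by $\sigma$ then gives an isomorphism of graded $R$-modules $R(-q) \xrightarrow{\sim} R^{(p)}$: it is injective because $\sigma \neq 0$, and surjective because any section of $mk(K_X+\Delta)$ vanishing to order $\geq p$ along $S$ is divisible by $\sigma$. In particular $R^{(p)} = \sigma R$ is principal.

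It therefore suffices to prove that the quotient $A$-algebra $R/R^{(p)}$ is finitely generated, for then lifts of its generators together with $\sigma$ generate $R$: iterating multiplication by $\sigma$ pulls the degree of any element of $R$ down to lie in the bounded range $[0,q)$. The filtration $R \supset R^{(1)} \supset \cdots \supset R^{(p)}$ descends to a finite filtration of $R/R^{(p)}$ whose successive quotients are the $R_S$-modules $R^{(j)}/R^{(j+1)}$ for $0 \le j < p$; the bottom piece is $R_S$ itself (f.g.\ by hypothesis), and each higher piece injects $R_S$-equivariantly into $\bigoplus_m H^0(S, mk(K_S+\Omega) - jT)$ where $T := S|_S$. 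The main obstacle, in my view, is to prove that each of these higher pieces is finitely generated as an $R_S$-module; this is where the $f$-ampleness of $-S$ and the section $\sigma$ must be used carefully, so that powers of $\sigma$ exhaust the piece in bounded degrees. Once this is in hand, a routine Noetherian/iterated-extension argument assembles a finite $A$-algebra generating set for $R/R^{(p)}$, and hence for $R$, completing the proof.
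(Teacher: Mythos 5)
Your part (1) and the forward direction of (2) match the paper. For the converse, however, your proposal has a genuine gap, which you yourself flag: you never prove that the intermediate graded pieces $R^{(j)}/R^{(j+1)}$, $1\le j<p$, are finitely generated $R_S$-modules. These pieces are images of restriction maps into $\bigoplus_m H^0(S, mk(K_S+\Omega)-jS|_S)$, i.e.\ twists of the restricted algebra, and their finite generation over $R_S$ does not follow formally from the finite generation of $R_S$ itself; without it the iterated-extension argument for $R/R^{(p)}$ does not close. There is also a smaller unaddressed point: to get a section $\sigma$ with $\operatorname{div}(\sigma)=pS$ you need $qk(K_X+\Delta)\sim pS$ as an actual linear equivalence, and passing from the numerical proportionality given by $\rho(X/Z)=1$ to linear equivalence requires the base point free theorem (the difference is numerically trivial over $Z$, hence semiample over $Z$) plus a further localization on $Z$ to kill the divisor pulled back from the base; your phrase about "absorbing pullbacks from the base" is where this work lives.

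The paper avoids your main obstacle entirely by first applying the truncation lemma: since $a(K_X+\Delta)\sim bS$ (after the reduction just described), $R(X,k(K_X+\Delta))$ is finitely generated if and only if $R(X,S)$ is. Working with the algebra $R(X,S)$ of the mobile divisor $S$ itself, one picks $S'\sim S$ with $S\not\subset\operatorname{Supp}S'$ and observes that the kernel of $R(X,S)\to R_S(X,S')$ is the principal ideal generated by the degree-one rational function $g_1$ with $(g_1)=S-S'$: a section vanishing along $S$ is divisible by $g_1$ with quotient of degree one less. So the filtration you are trying to build has length exactly one, the quotient is the restricted algebra on the nose, and a final application of the truncation lemma on $S$ identifies $R_S(X,S')$ with $R_S(X,k(K_X+\Delta))$ up to finite generation. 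If you want to salvage your approach, the cleanest fix is precisely this renormalization of the grading so that $S$ sits in degree one; otherwise you must supply a separate argument for the finite generation of each $R^{(j)}/R^{(j+1)}$.
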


We start with the following well known result:
\begin{lemma}\label{l_truncation} Let $R$ be a graded algebra which is an integral 
domain and let $d$ be a positive integer.

Then $R$ is finitely generated if and only if $R_{(d)}$ is finitely generated.
\end{lemma}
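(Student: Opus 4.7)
The plan is to prove both implications by reducing to the single claim that $R$ is a finite module over $R_{(d)}$, and then invoking Artin--Tate one way and Noetherianness of $R_{(d)}$ the other way. The integral domain hypothesis will only be needed for the harder $(\Leftarrow)$ direction.

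For the forward direction, I would write $R = R_0[x_1, \ldots, x_n]$ with each $x_i$ homogeneous of degree $e_i$. Each $x_i$ satisfies the monic relation $X^d - x_i^d = 0$ over $R_{(d)}$, since $\deg(x_i^d) = d e_i$ is divisible by $d$. Hence $R$ is a finitely generated integral extension of $R_{(d)}$, and so a finite $R_{(d)}$-module. Applying Artin--Tate to the tower $R_0 \subset R_{(d)} \subset R$ immediately yields that $R_{(d)}$ is finitely generated over $R_0$.

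For the reverse direction, I would assume $R_{(d)}$ is finitely generated (hence Noetherian) and aim to show $R$ is a finite $R_{(d)}$-module; finite generation of $R$ as an $R_0$-algebra then follows. Decompose $R = \bigoplus_{r=0}^{d-1} M_r$ with $M_r = \bigoplus_{m \geq 0} R_{md+r}$ viewed as graded $R_{(d)}$-modules, so that $M_0 = R_{(d)}$ is handled tautologically. For each $r \neq 0$ with $M_r \neq 0$, I would fix a nonzero homogeneous $z_0 \in R_{m_0 d + r}$ and consider multiplication by $z_0^{d-1}$. A short degree check gives
\[
\deg(z \cdot z_0^{d-1}) = d\bigl(m + (d-1)m_0 + r\bigr) \quad \text{for } z \in R_{md+r},
\]
so this defines an $R_{(d)}$-linear map $\varphi_r \colon M_r \to R_{(d)}$. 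The integral domain hypothesis then makes $\varphi_r$ injective (because $z_0^{d-1} \neq 0$), so $M_r$ embeds as an ideal in the Noetherian ring $R_{(d)}$ and is therefore finitely generated.

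The main obstacle is this last direction: a priori the graded pieces of $R$ in each nonzero residue class mod $d$ could appear in arbitrarily large degree without being directly controlled by the generators of $R_{(d)}$. The trick is the map $\varphi_r$, which converts the problem of bounding these graded pieces into the soft statement that ideals in the Noetherian ring $R_{(d)}$ are finitely generated. The integral domain hypothesis is precisely what makes this injection work.
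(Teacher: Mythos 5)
Your proof is correct, but it takes a genuinely different route from the paper's in both directions. For the forward implication the paper writes $R_{(d)}$ as the ring of invariants of a $\mathbb{Z}/d$-action on $R$ (acting on $R_i$ by $\zeta^i$ for $\zeta$ a primitive $d$-th root of unity) and quotes E.~Noether's theorem on finite generation of invariant rings; your Artin--Tate argument via the tower $R_0\subset R_{(d)}\subset R$ reaches the same conclusion without needing roots of unity or the invariant-theoretic input, at the cost of explicitly requiring $R_0$ Noetherian (harmless here, since in the applications $R_0$ is the coordinate ring of an affine variety, and the paper's own route has the same implicit requirement). For the reverse implication the paper also observes that $R$ is integral over $R_{(d)}$ via $x^d-f^d$, but then appeals to Noether's theorem on finiteness of integral closures; your decomposition $R=\bigoplus_{r=0}^{d-1}M_r$ together with the injection $\varphi_r\colon M_r\to R_{(d)}$, $z\mapsto z\cdot z_0^{d-1}$, replaces that appeal by the elementary fact that ideals of a Noetherian ring are finitely generated, which is both more self-contained and makes transparent exactly where the integral domain hypothesis enters (injectivity of $\varphi_r$); the paper uses the same hypothesis, but only implicitly, in order to pass to fraction fields. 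Both arguments are complete; yours is arguably the more elementary and explicit of the two.
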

\begin{proof} Suppose that $R$ is finitely generated.  It is easy to write down an action
of the cyclic group $\mathbb{Z}_d$ on $R$ so that the invariant ring is $R_{(d)}$.  Thus
$R_{(d)}$ is finitely generated by the Theorem of E. Noether which states that the ring of
invariants of a finitely generated ring under the action of a finite group is finitely
generated.

Suppose now that $R_{(d)}$ is finitely generated. Let $f\in R_i$.  Then $f$ is a root of
the monic polynomial $x^d-f^d\in R_{(d)}[x]$.  It follows that $R$ is integral over
$R_{(d)}$ and the result follows by another Theorem of E. Noether on finiteness of
integral closures.
\end{proof}

\begin{lemma}\label{l_restricted} Let $S$ be a normal prime divisor on $X$ and let 
$B$ an integral Weil divisor which is $\mathbb Q$-Cartier and whose support does not contain $S$.
\begin{itemize} 
\item If $R(X,B)$ is finitely generated then $R_S(X,B)$ is finitely generated. 
\item If $S\sim B$ and $R_S(X,B)$ is finitely generated then $R(X,B)$ is finitely
generated.
\end{itemize} 
\end{lemma}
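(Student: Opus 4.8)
The plan is to treat the two bullets separately, as they have quite different flavors. For the first bullet, the restricted algebra $R_S(X,B)$ is by definition the image of the ring homomorphism $\map R(X,B).R(S,B|_S).$, hence it is a quotient of $R(X,B)$. A quotient of a finitely generated algebra over a Noetherian ring (here $\mathbb{C}$, or more generally the affine base ring) is again finitely generated, so this direction is immediate once one is careful that the image really is a subalgebra of $R(S,B|_S)$ — which it is, since restriction of sections is multiplicative. I would spend essentially one sentence on this.

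For the second bullet, the hypothesis $S\sim B$ is the crucial extra input. Fix a section $\sigma\in H^0(X,\ring X.(B))$ whose divisor of zeros is exactly $S$; multiplication by $\sigma$ gives, for each $m$, an inclusion $\map H^0(X,\ring X.(mB)).H^0(X,\ring X.((m+1)B)).$, realizing $R(X,B)$ as the direct limit, and more usefully identifying the kernel of restriction to $S$ in degree $m+1$ with $\sigma\cdot H^0(X,\ring X.(mB))$. Concretely there is a short exact sequence of graded $R(X,B)$-modules
\[
0\longrightarrow R(X,B)(-1)\overset{\cdot\sigma}{\longrightarrow} R(X,B)\longrightarrow R_S(X,B)\longrightarrow 0,
\]
where $(-1)$ denotes the degree shift. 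Thus $R(X,B)/(\sigma)\cong R_S(X,B)$ as graded rings.

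The key step is then to deduce finite generation of $R(X,B)$ from that of $R(X,B)/(\sigma)$. I would argue as follows: choose homogeneous elements $f_1,\dots,f_r\in R(X,B)$ whose images generate $R_S(X,B)=R(X,B)/(\sigma)$ as an algebra over the base ring. Let $A'$ be the subalgebra of $R(X,B)$ generated by $\sigma$ together with $f_1,\dots,f_r$. I claim $A'=R(X,B)$. Given any homogeneous $g\in R(X,B)$ of degree $m$, its image mod $(\sigma)$ lies in the algebra generated by the $\bar f_i$, so there is a polynomial expression $P(f_1,\dots,f_r)$ with $g-P(f_1,\dots,f_r)\in (\sigma)$; since the difference is homogeneous of degree $m$, it equals $\sigma\cdot h$ for some homogeneous $h$ of degree $m-1$ (using that $\sigma$ is a nonzerodivisor, as $R(X,B)$ is a domain — $X$ being a variety). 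By induction on the degree $m$, $h\in A'$, hence $g\in A'$. The base case $m=0$ is clear. So $R(X,B)$ is finitely generated.

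The only genuine subtlety — and the step I would flag as the place to be careful rather than a real obstacle — is the claim that the difference $g - P(f_1,\dots,f_r)$ can be taken homogeneous of the correct degree and thus lies in $\sigma$ times the degree-$(m-1)$ piece. This is fine because everything in sight is graded: $R(X,B)$ is a graded domain, $\sigma$ is homogeneous of degree $1$, the ideal $(\sigma)$ is homogeneous, and one may replace $P$ by its degree-$m$ homogeneous component without changing membership in the quotient. An alternative, cleaner packaging is to invoke the standard fact that if $R$ is a graded ring, $R_0$ is Noetherian, and $R/\mathfrak{a}$ is a finitely generated $R_0$-algebra for a homogeneous ideal $\mathfrak{a}$ generated by finitely many homogeneous elements, then $R$ is finitely generated provided $R$ is Noetherian in low degrees — but the direct induction above avoids needing any such ambient hypothesis and is self-contained, so that is the route I would take.
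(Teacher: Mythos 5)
Your proof is correct and follows essentially the same route as the paper: both treat the first bullet as immediate (the restricted algebra is a quotient), and for the second bullet both use $S\sim B$ to produce a degree-one homogeneous nonzerodivisor $\sigma$ (the paper's $g_1$) generating the kernel of $\phi\colon R(X,B)\to R_S(X,B)$. The only difference is that you explicitly carry out the final induction on degree showing that $R(X,B)$ is generated by $\sigma$ together with lifts of generators of $R(X,B)/(\sigma)$, a step the paper leaves implicit after observing that $\ker\phi=(g_1)$.
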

\begin{proof} Since there is a surjective homomorphism $\phi\colon\map R(X,B).R_S(X,B).$, it is
clear that if $R(X,B)$ is finitely generated then $R_S(X,B)$ is finitely generated.

Suppose now that $R_S(X,B)$ is finitely generated and $S\sim B$.  Then there is a rational
function $g_1$ such that $(g_1)=S-B$.  If we consider the elements of $R(X,B)_m$ as
rational functions, then a rational function $g$ belongs to $R(X,B)_m$ if and only if
$(g)+mB\geq 0$.  But if $g$ is in the kernel of $\phi$, then there is a divisor $S'\geq 0$
such that $(g)+mB=S+S'$.  It follows that $(g/g_1)+(m-1)B=S'$ so that $g/g_1=h\in
R(X,B)_{m-1}$.  But then the kernel of $\phi$ is the principal ideal generated by
$g_1$.  \end{proof}

\begin{proof}[Proof of \eqref{t_restricted}] It is well known that the flip
$f^{+}\colon\map X^+.Z.$ exists if and only if the sheaf of graded $\ring Z.$-algebras
$$
\bigoplus_{m\in\mathbb{N}\,:\,k|m}f_*\ring X.(m(K_X+\Delta)),
$$
is finitely generated, cf.~\cite[6.4]{KM98}.  Since this can be checked locally, this
gives (1).

If $Z=\sp A$ is affine it suffices to check that $R(X,k(K_X+\Delta))$ is a finitely
generated $A$-algebra.  Since the relative Picard number is one, there are real numbers
$a$ and $b$ such that $a(K_X+\Delta)$ and $bS$ are numerically equivalent over $Z$.  As
both $-(K_X+\Delta)$ and $-S$ are ample $\mathbb{Q}$-divisors we may assume that $a$ and
$b$ are both positive integers.  Moreover, as $a(K_X+\Delta)-bS$ is numerically trivial
over $Z$, it is semiample over $Z$ by the base point free theorem.  In particular, we may
replace numerical equivalence by linear equivalence,
$$
a(K_X+\Delta) \sim_Z bS.
$$
But then there is a rational function $g$ and a divisor $D$ on $Z$ such that
$$
a(K_X+\Delta)=bS+f^*D+(g).
$$
As any line bundle on a quasi-projective variety is locally trivial, possibly passing to
an open subset of $Z$, and using (1), we may assume that $D \sim 0$, so that
$$
a(K_X+\Delta) \sim bS.
$$
By \eqref{l_truncation} it follows that $R(X,k(K_X+\Delta))$ is finitely generated if and
only if $R(X,S)$ is finitely generated.  Since $Z$ is affine and $f$ is small, $S$ is
mobile so that $S\sim S'$ where $S'\geq 0$ is a divisor whose support does not contain
$S$.  By \eqref{l_restricted}, $R(X,S)$ is finitely generated if and only if $R_S(X,S')$
is finitely generated.  Since $a(K_X+\Delta)|_S\sim bS'|_S$ the result follows by
\eqref{l_truncation}.  \end{proof}

\section{Multiplier ideal sheaves}
\label{s_squeeze}

The main result of this section is:
\begin{theorem}\label{t_squeeze} Let $\pi\colon\map X.Z.$ be a projective morphism 
to a normal affine variety $Z$, where $(X,\Delta=S+A+B)$ is a log pair, $S=\rdown\Delta.$
is irreducible, $(X,S)$ is log smooth, and both $A\geq 0$ and $B\geq 0$ are
$\mathbb{Q}$-divisors.  Let $k$ be any positive integer and $0\leq \Phi\leq
\Omega=(\Delta-S)|_S$ be any divisor such that both $k(K_S+\Phi)$ and $k(K_X+\Delta)$ are
Cartier.  Let $C=A/k$.

If there is an integer $l>1$ and an integral divisor $P\geq 0$ such that $lA$ is Cartier, 
$C-\frac{(k-1)}mP$ is ample, $(X,\Delta+\frac{k-1}mP)$ is purely log terminal and
$$
l|k(K_S+\Phi)|+m(\Omega-\Phi)+(mC+P)|_S\subset |m(K_X+\Delta+C)+P|_S,
$$ 
where $m=kl$, then 
$$
|k(K_S+\Phi)|+k(\Omega-\Phi)\subset |k(K_X+\Delta)|_S.
$$ 
\end{theorem}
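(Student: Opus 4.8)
The hypothesis gives us, for $m=kl$, an inclusion of linear systems on $S$:
$$l|k(K_S+\Phi)|+m(\Omega-\Phi)+(mC+P)|_S\subset |m(K_X+\Delta+C)+P|_S.$$
The goal is to deduce
$$|k(K_S+\Phi)|+k(\Omega-\Phi)\subset |k(K_X+\Delta)|_S.$$
So I must take a section $\sigma$ of $k(K_S+\Phi)$, build from it a section of $k(K_X+\Delta)$ on $X$ which restricts to (a divisor in the linear equivalence class of) $\sigma+k(\Omega-\Phi)$ on $S$. The natural idea is to use the given inclusion with the $l$-th power $\sigma^{\otimes l}$: it produces a divisor $G\in|m(K_X+\Delta+C)+P|$ on $X$ whose restriction to $S$ is $l\cdot\mathrm{div}(\sigma)+m(\Omega-\Phi)+(mC+P)|_S$. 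Roughly, $\frac1l G$ should be a $\mathbb{Q}$-divisor in $|K_X+\Delta+C+\tfrac Pm|_{\mathbb{Q}}$ whose restriction to $S$ is $\mathrm{div}(\sigma)+\tfrac mk(\Omega-\Phi)/... $ — I need to track coefficients carefully, but the point is to divide by $l$, absorb the ample pieces $C$ and $\tfrac Pm$ using vanishing, and descend back to an \emph{integral} section of $k(K_X+\Delta)$.

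The key steps, in order: \textbf{(1)} Pick $\sigma\in H^0(S,\ring S.(k(K_S+\Phi)))$ nonzero; by the hypothesis $\sigma^{\otimes l}$ (times the fixed section cutting out $m(\Omega-\Phi)+(mC+P)|_S$) is the restriction of some $G\in|m(K_X+\Delta+C)+P|$. \textbf{(2)} Form the $\mathbb{Q}$-divisor $D=\tfrac1l G$, so $D\sim_{\mathbb{Q}}k(K_X+\Delta)+mC/l+P/l=k(K_X+\Delta+C)+P/l$ (using $C=A/k$, $m/l=k$), and consider the boundary $\Delta'=\Delta+\tfrac{k-1}{m}P+\text{(a small piece of }D)$ — here I'd use that $C-\tfrac{k-1}{m}P$ is ample and $(X,\Delta+\tfrac{k-1}{m}P)$ is purely log terminal to arrange that, after perturbing, $S$ is still a log canonical centre (an ``lc place'') of the pair and the pair is plt near $S$. \textbf{(3)} Apply an extension/lifting statement — this is where the machinery of the paper (multiplier ideals, Kawamata–Viehweg vanishing, i.e. the relative vanishing $H^1(X,\ring X.(K_X+\Delta''+\text{ample}-S))=0$ together with $S\not\subset\mathbf{B}$) is used to lift the restricted section back. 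Concretely, the divisibility hypotheses ($lA$ Cartier, $k(K_X+\Delta)$ Cartier) guarantee that after dividing by $l$ the relevant bundle on $X$ is again integral, namely $k(K_X+\Delta)$, up to an ample ($C$) and effective-exceptional ($\tfrac1l P$, handled since $C-\tfrac{k-1}{m}P$ is ample and $P$ contributes to a plt/klt boundary) correction. \textbf{(4)} Conclude that there is $\tau\in H^0(X,\ring X.(k(K_X+\Delta)))$ with $\tau|_S$ equal to $\sigma$ times the fixed section cutting out $k(\Omega-\Phi)$ on $S$, i.e. $\mathrm{div}(\tau)|_S=\mathrm{div}(\sigma)+k(\Omega-\Phi)$, which is exactly the desired inclusion.

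\textbf{Main obstacle.} The crux is Step (3): passing from the hypothesis, which lives ``at level $m$ with an extra ample $mC$ and an extra divisor $P$'', back down to ``level $k$ with nothing extra''. One must divide the divisor $G$ by $l$, which only produces a $\mathbb{Q}$-divisor, and then reabsorb the fractional ample/exceptional contributions $\tfrac{m}{l}C=kC=A$ and $\tfrac1l P$ without destroying integrality of the target bundle $k(K_X+\Delta)$ and without losing positivity needed for vanishing. This is precisely the point of the technical hypotheses ``$C-\tfrac{k-1}{m}P$ ample'' and ``$(X,\Delta+\tfrac{k-1}{m}P)$ plt'': they let one redistribute $P$ as $\tfrac{k-1}{m}P$ into a plt boundary plus $\tfrac1m P$ absorbed by the ample part, and similarly split $A=kC$ so that $\tfrac1k A$ can be used as the ample ``room'' for a Kawamata–Viehweg-type lifting argument (this is the content of what the paper calls \eqref{t_squeeze} and ultimately \eqref{t_lift}/\eqref{t_multiplier}). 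I expect the bookkeeping of which multiples land in which bundle — and checking that $S$ remains a clean lc centre after all perturbations — to be the delicate part; the geometric input (vanishing, restriction exact sequence $0\to\ring X.(L-S)\to\ring X.(L)\to\ring S.(L|_S)\to 0$) is standard once the numerics are set up.
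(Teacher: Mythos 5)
Your plan has the right shape — lift $l\Sigma$ (plus the fixed part) to a divisor $G\in|m(K_X+\Delta+C)+P|$ via the hypothesis, scale $G$ down, insert it into a multiplier boundary, and descend by a Nadel/Kawamata--Viehweg extension argument — and this is exactly the route the paper takes. But the part you flag as ``delicate bookkeeping'' and leave unworked is precisely where the statement's hypotheses get consumed, and the one concrete choice you do make does not work. You propose $D=\tfrac1l G$, but then $\tfrac1l G\sim_{\mathbb{Q}}k(K_X+\Delta)+A+\tfrac1l P$; writing $N=k(K_X+\Delta)-K_X-S=(k-1)(K_X+\Delta)+A+B$ for the twist needed to land in $\mathcal{O}_X(k(K_X+\Delta))=\mathcal{O}_X(K_X+S+N)$, one gets $N-\tfrac1l G\sim_{\mathbb{Q}}-(K_X+\Delta)+B-\tfrac1l P$, which is not ample (in the flipping situation it is anti-ample), so Nadel vanishing does not apply and the extension fails.

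The paper instead takes $\Lambda=\tfrac{k-1}{m}G+B$. With this exponent one computes $N-\Lambda\sim_{\mathbb{Q}}C-\tfrac{k-1}{m}P$, which is ample exactly by hypothesis; the $+B$ is needed to cancel the $B$ sitting inside $N$. Equally essential is the second verification: $\Lambda|_S-\bigl(\Sigma+k(\Omega-\Phi)\bigr)\leq\Omega+\tfrac{k-1}{m}P|_S$, and since $(S,\Omega+\tfrac{k-1}{m}P|_S)$ is klt (from the plt hypothesis), \eqref{l_theory}(3) gives $\mathcal{I}_{\Sigma+k(\Omega-\Phi)}\subset\mathcal{J}_{\Lambda|_S}$, so $\Sigma+k(\Omega-\Phi)$ lies in $H^0(S,\mathcal{J}_{\Lambda|_S}(k(K_S+\Omega)))$ and extends by \eqref{l_ses}. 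Neither of these two inequalities holds for ``a small piece of $G/l$'' in general; the coefficient must be $\tfrac{k-1}{m}$ (with the extra $B$) for both to hold simultaneously. So the gap is not cosmetic: without this specific $\Lambda$, the argument does not close, and your plan as written gives no way to find it.
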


To prove \eqref{t_squeeze}, we need a variant of multiplier ideal sheaves:
\begin{definition-lemma}\label{d_variant} Let $(X,\Delta)$ be a log smooth pair where
$\Delta$ is a reduced divisor and let $V$ be a linear system whose base locus contains no
log canonical centres of $(X,\Delta )$.  Let $\mu\colon\map Y.X.$ be a log resolution of
$V$ and $(X,\Delta )$
and let $F$ be the fixed divisor of the linear system $\mu^*V$.  Let $K_Y+\Gamma =\mu
^*(K_X+\Delta)+ E$ where $\Gamma=\sum P_i$ is the sum of the divisors on $Y$ of log
discrepancy zero.

Then for any real number $c\geq 0$, define the \textbf{multiplier ideal sheaf}
$$
\mathcal{J}_{\Delta,c\cdot V}:=\mu _*\ring Y.( E-\rdown cF.).
$$
If $\Delta=0$ we will write $\mathcal{J}_{c\cdot V}$ and if $D=cG$, where $G>0$ is a
Cartier divisor, we define
$$
\mathcal{J}_{\Delta,D}:=\mathcal{J}_{\Delta,c\cdot V},
$$
where $V=\{G\}$.   
\end{definition-lemma}
\begin{proof} We have to show that the definition of the multiplier ideal sheaf is
independent of the choice of log resolution.  Let $\mu\colon\map Y.X.$ and $\mu'\colon\map
Y'.X.$ be two log resolutions of $(X,\Delta)$ and $V$.  We may assume that $\mu'$
factors through $\mu$ via a morphism $\nu\colon\map Y'.Y.$.  Then $F'=\nu ^* F$ as $\mu ^*V-F$ is
free, and
\begin{align*} 
E'-cF' &= K_{Y'}+\Gamma'-\mu'^*(K_X+\Delta)-cF' \\
       &= K_{Y'}+\Gamma'-\nu^*(K_Y+\Gamma-E+cF) \\ 
       &= \nu^*( E-\rdown cF.)+K_{Y'}+\Gamma'-\nu^*(K_Y+\Gamma+\{cF\}) \\
       &= \nu^*(E -\rdown cF.)+G.
\end{align*} 
Since $(Y,\Gamma+E+F)$ is log smooth, it follows that $(Y,\Gamma+\{cF\})$ is log canonical
and has the same log canonical places as $(Y,\Gamma)$ and hence as $(X,\Delta )$.  Thus
$\rup G.\geq 0$ and since $\nu _*(K_{Y'}+\Gamma ')=K_Y+\Gamma$, $\rup G .$ is
$\nu$-exceptional. Then
\begin{align*}
\mu'_*\ring Y'.(E'-\rdown cF'.)&=\mu_*(\nu_*\ring Y'.(E'-\rdown cF'.)) \\
                               &=\mu_*(\nu_*\ring Y'.(\nu^*(E-\rdown cF.)+\rup G.)) \\
                               &=\mu_*\ring Y.(E-\rdown cF.). \qedhere
\end{align*}
\end{proof}

We need to develop a little of the theory of multiplier ideal sheaves.
\begin{lemma}\label{l_theory} Let $(X,\Delta)$ be a log smooth pair where $\Delta $ is
reduced, let $V$ be a linear system whose base locus contains no log canonical centres
of $(X,\Delta )$ and let $G\geq 0$ and $D\geq 0$ be $\mathbb{Q}$-Cartier divisors whose
supports contain no log canonical centres of $(X,\Delta )$.

 Then
\begin{enumerate} 
\item $\mathcal{J}_{\Delta,D}=\ring X.$ if and only if $(X,\Delta+D)$ is divisorially log terminal 
and $\rdown D.=0$.  
\item If $0\leq \Delta'\leq \Delta$ then $\mathcal{J}_{\Delta,c\cdot V}\subset
\mathcal{J}_{\Delta',c\cdot V}$.  In particular, $\mathcal{J}_{\Delta,c\cdot V}\subset
\mathcal{J}_{c\cdot V} \subset \ring X.$.
\item If $\Sigma\geq 0$ is a Cartier divisor, $D-\Sigma\leq G$ and
$\mathcal{J}_{\Delta,G}=\ring X.$ then $\mathcal{I}_{\Sigma}\subset
\mathcal{J}_{\Delta,D}$.
\end{enumerate} 
\end{lemma}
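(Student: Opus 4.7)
My plan is to work on a common log resolution $\mu\colon Y\to X$, recast each claim as a comparison of divisors on $Y$, and push forward. By the definition-lemma we are free to refine $\mu$ as each part demands. Writing $K_Y+\Gamma=\mu^*(K_X+\Delta)+E$, a coefficient calculation shows $E$ is supported entirely on the $\mu$-exceptional locus (because $\Delta$ is reduced).

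For (2), use a common log resolution of $(X,\Delta)$, $(X,\Delta')$, and $V$, with the analogous identity $K_Y+\Gamma'=\mu^*(K_X+\Delta')+E'$. Subtracting,
\[
E-E'=(\Gamma-\Gamma')-\mu^*(\Delta-\Delta').
\]
Since $\Delta-\Delta'\geq 0$ is reduced, $\mu^*(\Delta-\Delta')$ has non-negative integer multiplicities; a case split on each prime divisor $P\subset Y$ shows that whenever $P\in\Gamma\setminus\Gamma'$ the coefficient $\mathrm{mult}_P\mu^*(\Delta-\Delta')\geq 1$ cancels the $+1$ contribution from $\Gamma-\Gamma'$, yielding $E\leq E'$. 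Pushing forward the inclusion $\mathcal O_Y(E-\lfloor cF\rfloor)\subset\mathcal O_Y(E'-\lfloor cF\rfloor)$ gives $\mathcal J_{\Delta,c\cdot V}\subset\mathcal J_{\Delta',c\cdot V}$. Specializing to $\Delta'=0$, the same analysis yields $\mathcal J_{c\cdot V}\subset\mathcal O_X$. For (3), $\Sigma$ Cartier means $\mu^*\Sigma$ is integral, so
\[
\lfloor\mu^*D\rfloor-\mu^*\Sigma=\lfloor\mu^*D-\mu^*\Sigma\rfloor\leq\lfloor\mu^*G\rfloor.
\]
The hypothesis $\mathcal J_{\Delta,G}=\mathcal O_X$ yields $\lfloor\mu^*G\rfloor\leq E$ via part (1), hence $-\mu^*\Sigma\leq E-\lfloor\mu^*D\rfloor$, and pushing forward gives $\mathcal I_\Sigma\subset\mathcal J_{\Delta,D}$.

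Part (1) is the main obstacle. For $(\Leftarrow)$, take $\mu$ to be a log resolution witnessing dlt-ness of $(X,\Delta+D)$. The dlt condition forces $D$ to share no component with $\Delta$, and together with $\lfloor D\rfloor=0$ this means the log-discrepancy-zero divisors of $(X,\Delta)$ and $(X,\Delta+D)$ coincide on $Y$ (both equal to the strict transform of $\Delta$). Hence $E-\mu^*D$ is the discrepancy divisor for $(X,\Delta+D)$, and the dlt bound on its coefficients, combined with the non-negativity of $\{\mu^*D\}$, shows that $E-\lfloor\mu^*D\rfloor\geq 0$ is $\mu$-exceptional and effective, so pushes down to $\mathcal O_X$. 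For $(\Rightarrow)$, $\mathcal J_{\Delta,D}=\mathcal O_X$ forces $1\in\Gamma(Y,\mathcal O_Y(E-\lfloor\mu^*D\rfloor))$ on any log resolution, i.e., $E\geq\lfloor\mu^*D\rfloor$; reading non-exceptional coefficients yields $\lfloor D\rfloor=0$. The delicate step is to exhibit a log resolution of $(X,\Delta+D)$ witnessing dlt-ness: since $\mathrm{Supp}(D)$ contains no log canonical centre of $(X,\Delta)$, every exceptional divisor over such a centre has coefficient zero in $\mu^*D$, so by choosing $\mu$ to avoid extracting log-discrepancy-zero places we translate $E\geq\lfloor\mu^*D\rfloor$ into strictly positive log discrepancy for every exceptional divisor of $(X,\Delta+D)$.
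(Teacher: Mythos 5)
Your proof is correct and follows essentially the same route as the paper, which disposes of (1) as immediate from the definitions, of (2) by the monotonicity $a(P,X,\Delta')\geq a(P,X,\Delta)$ that your coefficient computation on a common resolution makes explicit, and of (3) by the same divisor comparison on $Y$, packaged there as $\mathcal{J}_{\Delta,G}(-\Sigma)=\mathcal{J}_{\Delta,G+\Sigma}\subset \mathcal{J}_{\Delta,D}$. The one point in (1)$(\Rightarrow)$ worth stating precisely is that the log resolution of $(X,\Delta+D)$ extracting no log canonical place of $(X,\Delta)$ is supplied by Szab\'o's resolution lemma (as invoked in the paper's proof of \eqref{l_ses}), using that $\operatorname{Supp}(D)$ contains no log canonical centre of $(X,\Delta)$; your clause about exceptional divisors over such centres having coefficient zero in $\mu^*D$ is not quite the right justification, but the intended argument goes through.
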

\begin{proof} (1) follows easily from the definitions.  

(2) follows from the fact that $a(P,X,\Delta')\geq a(P,X,\Delta)$ for all divisors $P$ on
$Y$.

To see (3), notice that as $\Sigma$ is Cartier and $\mathcal{J}_{\Delta,G}=\ring X.$, we have
$$\mathcal{J}_{\Delta,G}(-\Sigma)=\ring X.(-\Sigma)=\mathcal I _{\Sigma}.$$
But since $D \leq G+\Sigma$, we also have 
\[
\mathcal{J}_{\Delta,G}(-\Sigma)=\mathcal{J}_{\Delta,G+\Sigma} \subset \mathcal{J}_{\Delta,D}.\qedhere
\]
\end{proof}

We have the following extension of (9.5.1) of \cite{Lazarsfeld04a} or (2.4.2) of
\cite{Takayama06}:
\begin{lemma}\label{l_ses} Let $\pi\colon\map X.Z.$ be a projective morphism to a 
normal affine variety $Z$.  Let $(X,\Delta)$ be a log smooth pair where $\Delta $ is
reduced, let $S$ be a component of $\Delta$, let $D\geq 0$ be a $\mathbb{Q}$-Cartier
divisor whose support does not contain any log canonical centres of $(X,\Delta)$ and let
$\Theta=(\Delta-S)|_S$.  Let $N$ be a Cartier divisor.  
\begin{enumerate} 
\item There is a short exact sequence
$$
\ses \mathcal{J}_{\Delta-S,D+S}.\mathcal{J}_{\Delta,D}.\mathcal{J}_{\Theta,D|_S}..
$$
\item(Nadel Vanishing) If $N-D$ is ample then 
$$
H^i(X,\mathcal{J}_{\Delta,D}(K_X+\Delta+N))=0,
$$
for $i>0$.
\item If $N-D$ is ample then 
$$
\map
{H^0(X,\mathcal{J}_{\Delta,D}(K_X+\Delta+N))}.{H^0(S,\mathcal{J}_{\Theta,D|_S}(K_X+\Delta+N))}.,
$$ 
is surjective.
\end{enumerate} 
\end{lemma}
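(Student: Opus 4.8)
The plan is to reduce everything to a standard statement on a log resolution and then push forward. First I would fix a log resolution $\mu\colon Y\to X$ of $(X,\Delta)$ that simultaneously resolves the linear system (or divisor) defining $D$ and is such that the strict transform $S'$ of $S$ is smooth, appears in $\Gamma'$ with coefficient one, and all relevant divisors (the components of $\Gamma'$, the exceptional locus, $S'$, and the fixed part $F$) have global normal crossings. By definition $\mathcal{J}_{\Delta,D}=\mu_*\ring Y.(E-\rdown cF.)$ with $K_Y+\Gamma'=\mu^*(K_X+\Delta)+E$, and I would set up analogous data $K_{Y}+(\Gamma'-S')|_{S'}=(\mu|_{S'})^*(K_S+\Theta)+E|_{S'}$ by adjunction (using that $S'$ meets the other components transversally, so that restricting the decomposition to $S'$ is compatible with the intrinsic decomposition on $S'$).

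For part (1), I would work on $Y$, where the sequence is the obvious one: restriction to $S'$ gives
$$
\ses \ring Y.(E-\rdown cF.-S').\ring Y.(E-\rdown cF.).\ring {S'}.((E-\rdown cF.)|_{S'})..
$$
Pushing forward by $\mu$ gives the first three terms of (1) once I identify $\mu_*\ring Y.(E-\rdown cF.-S')$ with $\mathcal{J}_{\Delta-S,D+S}$ — this uses that the discrepancy computation for the pair $(X,\Delta-S)$ with the extra divisor $D+S$ produces exactly $E-\rdown cF.-S'$ on $Y$ — and $\mu_*\ring {S'}.(\ldots)$ with $\mathcal{J}_{\Theta,D|_S}$, which is the adjunction identity above together with the fact (from \eqref{d_variant}) that this sheaf is independent of the resolution. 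The only subtlety is surjectivity of $\mu_*$ on the right, i.e.\ that $R^1\mu_*\ring Y.(E-\rdown cF.-S')=0$; this is local Kawamata--Viehweg vanishing for the klt-type pair obtained after subtracting $\rdown cF.$, applied relatively over $X$, and it is where log smoothness of $(Y,\Gamma'+E+F)$ and the hypothesis that $\Supp D$ contains no log canonical centre of $(X,\Delta)$ get used.

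Part (2) is Nadel vanishing: on $Y$ one has $K_Y+\Gamma'+\mu^*N = \mu^*(K_X+\Delta+N)+E$, and $N-D$ ample implies $\mu^*(N)-\{cF\}-(\text{boundary part of }\Gamma')$ is big and nef (after a small perturbation using that $F$ is $\mu$-exceptional or $\mu$-free on the mobile part and that ampleness is an open condition), so Kawamata--Viehweg vanishing on $Y$ gives $H^i(Y,\ring Y.(K_Y+\ldots))=0$ for $i>0$; combining with $R^j\mu_*$-vanishing (same KV argument, relative version) and the Leray spectral sequence yields $H^i(X,\mathcal{J}_{\Delta,D}(K_X+\Delta+N))=0$ for $i>0$. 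Finally, part (3) is immediate: take the long exact cohomology sequence of the twist of (1) by $K_X+\Delta+N$, and observe that the obstruction group is $H^1(X,\mathcal{J}_{\Delta-S,D+S}(K_X+\Delta+N))=H^1(X,\mathcal{J}_{\Delta-S,(D+S)}((K_X+(\Delta-S))+(N+S)))$; since $(N+S)-(D+S)=N-D$ is ample, part (2) applied to the pair $(X,\Delta-S)$ kills it, so the restriction map on $H^0$ is surjective.

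The main obstacle I expect is bookkeeping in part (1): one must check that the three multiplier ideals assembled from a \emph{single} resolution genuinely coincide with the intrinsically defined sheaves of \eqref{d_variant} (in particular that restricting the $E$, $\Gamma'$, $F$ data to $S'$ reproduces the intrinsic data for $(S,\Theta)$ and $D|_S$, which needs the transversality arrangements above), and that the relevant higher direct image vanishes so that $\mu_*$ is exact on the short exact sequence. Once that identification and the relative vanishing are in place, parts (2) and (3) are routine applications of Kawamata--Viehweg vanishing and the long exact sequence.
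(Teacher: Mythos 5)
Your proposal follows essentially the same path as the paper's proof: fix a log resolution of $(X,\Delta+D)$ which is an isomorphism over the log canonical centres, write the obvious short exact sequence on $Y$ restricting to the strict transform of $S$, match the three $\mu_*$ terms with the intrinsic multiplier ideal sheaves via the discrepancy/adjunction identities, kill $R^1\mu_*$ of the kernel by relative Kawamata--Viehweg vanishing (after perturbing by a small $\mu$-exceptional $F$ so the pair becomes klt and the relevant divisor becomes $\mu$-ample), prove (2) by the corresponding absolute vanishing on $Y$ together with Leray, and derive (3) from (1) and (2) applied to $(X,\Delta-S)$ with $D+S$, $N+S$. The one small wrinkle is your formula in part (2): the line bundle is $K_Y+\Gamma+\{\mu^*D\}+\mu^*(N-D)$, so the nef-and-big part you want is $\mu^*(N-D)$ (perturbed by $-F$ to become ample), not $\mu^*N - \{cF\} - (\text{boundary of }\Gamma')$ as you wrote — but your surrounding prose makes clear you had the correct intent, so this is a transcription slip rather than a gap.
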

\begin{proof} By the resolution lemma of \cite{Szabo94}, we may find a log resolution
$\mu\colon\map Y.X.$ of $(X,\Delta+D)$ which is an isomorphism over the generic point of
each log canonical centre of $(X,\Delta)$.  If $T$ is the strict transform of $S$ then we
have a short exact sequence
$$
\ses {\ring Y.(E-\rdown\mu^*D.-T)}.{\ring Y.(E-\rdown\mu^*D.)}.{\ring T.(E-\rdown\mu^*D.)}..
$$
Now $\mu_*\ring Y.(E-\rdown\mu^*D.)=\mathcal{J}_{\Delta,D}$.  If $\Gamma$ is the sum of
the divisors of log discrepancy zero then
$$
E-\mu^*D=(K_Y+\Gamma)-\mu^*(K_X+\Delta+D).
$$
But then 
$$
E-\mu^*D-T=(K_Y+\Gamma-T)-\mu^*(K_X+\Delta-S+(D+S)), 
$$
so that
$$
\mu_*\ring Y.(E-\rdown\mu^*D.-T)=\mathcal{J}_{\Delta-S,D+S},
$$
and
$$
(E-\mu^*D)|_T=K_T+(\Gamma-T)|_T-\mu^*(K_S+\Theta+D|_S),
$$
so that
$$
\mu_*\ring T.(E-\rdown\mu^*D.)=\mathcal{J}_{\Theta,D|_S}.
$$
Since $(Y,\Gamma+\mu^*D)$ is log smooth and $\Gamma$ and $\mu^*D$ have no common
components, $(Y,\Gamma+\{\mu^*D\})$ is divisorially log terminal.  Therefore we may pick
an exceptional divisor $F\geq 0$ such that $K_Y+\Gamma+\{\mu^*D\}+F$ is divisorially log
terminal and $-F$ is $\mu$-ample.  As
$$
E-\rdown\mu^*D.-T-(K_Y+\Gamma-T+\{\mu ^*D\}+F)=-\mu^*(K_X+\Delta+D)-F,
$$
is $\mu$-ample, Kawamata-Viehweg vanishing implies that
$$
R^1\mu_*\ring Y.(E-\rdown\mu^*D.-T)=0,
$$
and this gives (1).

 Similarly, Kawamata-Viehweg vanishing implies that 
$$
R^i\mu_*\ring Y.(\mu^*(K_X+\Delta+N)+E-\rdown\mu^*D.)=0,
$$
for $i>0$.  As $N-D$ is ample then, possibly replacing $F$ by a small multiple, we may
assume that $\mu^*(N-D)-F$ is ample.  As
\[
\mu^*(K_X+\Delta+N)+E-\rdown\mu^*D.-(K_Y+\Gamma+\{\mu^*D\}+F)=\mu^*(N-D)-F,
\]
is ample, Kawamata-Viehweg vanishing implies that
$$
H^i(Y,\ring Y.(\mu^*(K_X+\Delta+N)+E-\rdown\mu^*D.))=0,
$$
for $i>0$.  Since the Leray-Serre spectral sequence degenerates, this gives (2), and (3)
follows from (2).  \end{proof}

\begin{proof}[Proof of \eqref{t_squeeze}] Since $(X,\Delta +\frac {k-1}mP)$ is purely log
terminal, $(S, \Omega +\frac{k-1}mP|_S)$ is kawamata log terminal and $S$ is contained in
the support neither of $A$ nor of $P$.  If $\Sigma \in |k(K_S+\Phi)|$ then we may pick a
divisor
$$
G\in |m(K_X+\Delta+C)+P|\quad \text{such that}\quad G|_S=l\Sigma+m(\Omega-\Phi+C|_S)+P|_S.
$$ 
Let 
$$
\Lambda =\frac {k-1}mG+B\qquad \text{and}\qquad N=k(K_X+\Delta)-K_X-S.
$$ 
As the support of the $\mathbb{Q}$-divisor $\Lambda\geq 0$ does not contain $S$ and by
assumption
$$
N-\Lambda \sim _{\mathbb Q} C-\frac{k-1}mP,
$$
is ample, \eqref{l_ses} implies that sections of
$H^0(S,\mathcal{J}_{\Lambda|_S}(k(K_S+\Omega)))$ extend to sections of $H^0(X,\ring
X.(k(K_X+\Delta)))$.  Now
\begin{align*} 
&\phantom{\leq } \Lambda |_S -(\Sigma +k(\Omega-\Phi))\\
&=\frac {k-1}m(l\Sigma+m(\Omega-\Phi+C|_S)+P|_S)+B|_S-(\Sigma+k(\Omega-\Phi))  \\ 
&\leq \Omega+\frac {k-1}m P|_S.
\end{align*}
As $(S, \Omega +\frac{k-1}mP|_S)$ is kawamata log terminal, $\mathcal J_{\Omega
  +\frac{k-1}mP|_S}=\ring S.$ and we are done by (3) of \eqref{l_theory}. \end{proof}

\section{Asymptotic multiplier ideal sheaves}
\label{s_variations}

\begin{definition}\label{d_additive} Let $X$ be a normal variety and let $D$ be a divisor.  
An \textbf{additive sequence of linear systems associated to $D$} is a sequence $V\bd$, such that
$V_m\subset \proj {H^0(X,\ring X.(mD))}.$ and 
$$
V_i+V_j\subset V_{i+j}.
$$
\end{definition}

\begin{definition-lemma}\label{d_asymptotic} Suppose that $(X,\Delta)$ is 
log smooth, where $\Delta$ is reduced and let $V\bd$ be an additive sequence of linear
systems associated to a divisor $D$.  Assume that there is an integer $k>0$ such that no
log canonical centre of $(X,\Delta )$ is contained in the base locus of $V_k$.

If $c$ is a positive real number and $p$ and $q$ are positive integers divisible by $k$
then
$$
\mathcal{J}_{\Delta,\frac cp\cdot V_p}\subset \mathcal{J}_{\Delta,\frac cq\cdot V_q} \qquad \text{$\forall q$ divisible by $p$.}
$$
In particular the \textbf{asymptotic multiplier ideal sheaf} of $V\bd$ 
$$
\mathcal{J}_{\Delta,c\cdot V\bd}=\bigcup _{p>0}\mathcal{J}_{\Delta,\frac cp\cdot V_p},
$$
is given by $\mathcal{J}_{\Delta,c\cdot V\bd}=\mathcal{J}_{\Delta,\frac cp\cdot V_p} $,
for $p$ sufficiently large and divisible.  If we take $V_m=|mD|$ the complete linear
system, then define
$$
\mathcal{J}_{\Delta,c\cdot \|D\|}=\mathcal{J}_{\Delta,c\cdot V\bd},
$$
and if $S$ is a component of $\Delta$ and we take $W_m=|mD|_S$, then define
$$ 
\mathcal{J}_{\Theta,c\cdot \|D\|_S}=\mathcal{J}_{\Theta,c\cdot W\bd},
$$
where $\Theta=(\Delta -S)|_S$. 
\end{definition-lemma}
\begin{proof} If $p$ divides $q$ then pick a common log resolution $\mu\colon\map Y.X.$ of
$V_p$, $V_q$ and $(X,\Delta )$ and note that
$$
\frac 1qF_q\leq \frac 1p F_p,
$$
where $F_p$ is the fixed locus of $\mu^*V_p$ and $F_q$ is the fixed locus of
$\mu^*V_q$. Therefore $\mathcal{J}_{\Delta,\frac cp\cdot V_p}\subset
\mathcal{J}_{\Delta,\frac cq\cdot V_q}$.  The equality $\mathcal{J}_{\Delta,c\cdot
  V\bd}=\mathcal{J}_{\Delta,\frac cp\cdot V_p}, $ now follows as $X$ is Noetherian.
\end{proof}

We are now ready to state the main result of this section:
\begin{theorem}\label{t_multiplier} Let $\pi\colon\map X.Z.$ be a projective morphism 
to a normal affine variety $Z$.  Suppose that $(X,\Delta=S+B)$ is log smooth and purely
log terminal of dimension $n$, where $S=\rdown \Delta.$ is irreducible and let $k$ be a
positive integer such that $D=k(K_X+\Delta)$ is integral.  Let $A$ be any ample $\mathbb
Q$-divisor on $X$.  Let $q$ and $r$ be any positive integers such that $Q=qA$ is very
ample, $rA$ is Cartier and $(j-1)K_X+\Xi+rA$ is ample for every Cartier divisor $0\leq
\Xi\leq j\rup\Delta.$ and every integer $1\leq j \leq k+1$.

If the stable base locus of $D$ does not contain any log canonical centre of $(X,\rup
\Delta.)$, then
$$
\mathcal{J}_{\|mD|_S\|}\subset \mathcal{J}_{\Theta,\|mD+P\|_S} \qquad \text{for all} \qquad m\in\mathbb{N},
$$
where $\Theta=\rup B.|_S$, $p=qn+r$ and $P=pA$.  
Moreover, we have $$\pi _* \mathcal J_{\|mD|_S\|}(mD+P)\subset \im\left(
\pi _* \ring X.(mD+P)\to \pi _* \ring S.(mD+P)\right)$$
for all $m\in \mathbb N$.
\end{theorem}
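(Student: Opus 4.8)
The plan is to prove the inclusion of asymptotic multiplier ideal sheaves by reducing it to the single-step statement in \eqref{l_ses}(3) applied to a carefully chosen level $V_p$ of the additive sequence $W\bd = |mD + \cdot|_S$ — more precisely, to a level large and divisible enough that the asymptotic multiplier ideal is computed there. The final "moreover" clause on surjectivity onto the restricted image will then follow by combining the main inclusion with the surjectivity part of \eqref{l_ses}(3) on a suitable compatible model. So there are really two things to do: first establish $\mathcal{J}_{\|mD|_S\|}\subset \mathcal{J}_{\Theta,\|mD+P\|_S}$, and second leverage it, together with Nadel-type vanishing, to get the section-extension statement.

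\smallskip

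For the inclusion itself, I would argue by induction on $m$, the base case $m=0$ being trivial since both sheaves are $\ring S.$ (the stable base locus of $D$ avoids the log canonical centres of $(X,\rup\Delta.)$, so in particular $S$ is not in the base locus and $\Theta = \rup B.|_S$ defines a klt pair on $S$). For the inductive step, the key device is the numerical bookkeeping built into the hypotheses on $q$, $r$: the divisor $Q = qA$ is very ample and the positivity condition "$(j-1)K_X+\Xi+rA$ is ample for all Cartier $0\le\Xi\le j\rup\Delta.$, $1\le j\le k+1$" is exactly what one needs so that, writing a general section of $\mathcal{J}_{\|mD|_S\|}$ and pulling back to a common log resolution $\mu\colon\map Y.X.$ of the relevant linear systems, the residual divisor classes produced on $Y$ after subtracting fixed parts stay in the ample cone. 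Then I would apply \eqref{l_ses}(3) with the divisor data chosen so that $D$ there is a $\mathbb Q$-divisor built from $(1-1/m)$ times a general member of $|mD|$ plus $B$, and $N$ chosen so that $N-D$ is ample (this is where $p = qn+r$ and $P = pA$ are calibrated — the $qn$ handles the $n$-fold "room" one loses passing through $n$ inductive or resolution steps, and the $r$ handles the Cartier/ampleness correction). The output of \eqref{l_ses}(3) on $S$ is precisely an inclusion relating $\mathcal{J}_{\Theta,\cdot}$ at level $m$ to the analogous object at a lower level where the inductive hypothesis applies; chaining these and passing to the asymptotic limit via \eqref{d_asymptotic} gives the claimed containment. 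Throughout, one uses that $(S,\Omega + \tfrac{k-1}{m}P|_S)$-type pairs stay klt, exactly as in the proof of \eqref{t_squeeze}, so that the relevant multiplier ideals on $S$ are trivial and the estimate "$\Lambda|_S - (\Sigma + \cdots) \le \Omega + \cdots$" closes.

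\smallskip

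For the "moreover" statement, the point is that $\pi_*\mathcal{J}_{\|mD|_S\|}(mD+P)$ should be realized as the image of global sections on $X$. Here I would use that, on the common log resolution, Nadel vanishing (\eqref{l_ses}(2)) kills the obstruction $H^1$ of the relevant ideal twisted by $K_X+\rup\Delta.+N$ with $N-D$ ample — again the choice $P = pA$ with $p = qn+r$ makes the relevant twist of the form $K_X+\rup\Delta.+(\text{ample})$. The short exact sequence \eqref{l_ses}(1), relating the ideal on $X$ to the ideal on $S$, together with the vanishing, gives surjectivity of $H^0(X,\mathcal J_{\cdot}(mD+P))\to H^0(S,\mathcal J_{\Theta,\cdot}(mD+P))$; composing with the inclusion $\mathcal J_{\|mD|_S\|}(mD+P)\subset \mathcal J_{\Theta,\|mD+P\|_S}(mD+P)$ just proved, and noting $H^0(X,\mathcal J_\cdot(mD+P))\subset H^0(X,\ring X.(mD+P)) = \pi_*\ring X.(mD+P)$ since $Z$ is affine, yields $\pi_*\mathcal J_{\|mD|_S\|}(mD+P)\subset \im(\pi_*\ring X.(mD+P)\to\pi_*\ring S.(mD+P))$.

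\smallskip

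\textbf{The hard part} will be the precise calibration of the twisting divisor $P = pA$ with $p = qn+r$: one must check that every intermediate divisor appearing on the log resolution — after subtracting fixed parts of $|mD|$, of $|mD|_S$, and of the auxiliary systems, and after the adjunction restriction to $T$ — is ample or at least nef-and-big enough for Kawamata–Viehweg/Nadel vanishing to apply, uniformly in $m$. This is exactly the role of the somewhat opaque ampleness hypothesis "$(j-1)K_X+\Xi+rA$ ample for all $0\le\Xi\le j\rup\Delta.$", and matching it up cleanly with the residual classes produced by the induction is the delicate computational core. The structural skeleton — induction on $m$, invoke \eqref{l_ses}(1)--(3), pass to the asymptotic limit, use affineness of $Z$ — is otherwise routine given the machinery already set up.
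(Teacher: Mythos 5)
Your proposal correctly identifies the outer skeleton — induction on $m$, use of the short exact sequence / Nadel vanishing / surjectivity in \eqref{l_ses}, global generation via \eqref{t_generation}, and affineness of $Z$ to go from sheaves to sections — but it is missing the decisive structural device of the paper's proof, and without it the inductive step does not close.

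The issue is that $D = k(K_X+\Delta)$ is not \emph{one} adjoint step but $k$ of them, so one cannot simply go from level $mD$ to $(m+1)D$ via a single application of \eqref{l_ses}(3). The paper handles this by introducing the decomposition
$\Delta^s = \sum_{i:\ \delta_i > (k-s)/k}\Delta_i$,
which gives integral divisors with $S=\Delta^1\leq\cdots\leq\Delta^k=\rup\Delta.$ and $\Delta = \tfrac1k\sum_{s=1}^k\Delta^s$, and the recursively defined partial sums $D_{\leq s} = K_X+\Delta^s+D_{\leq s-1}$ (so $D_{\leq k}=D$). The heart of the proof is then a second, \emph{inner} induction on $s$ establishing
$\mathcal{J}_{\|mD|_S\|}\subset \mathcal{J}_{\Theta^{s+1},\|mD+D_{\leq s}+P\|_S}$ for $0\leq s\leq k$, where $\Theta^i=(\Delta^i-S)|_S$; at $s=0$ this is exactly the outer inductive hypothesis, and each step $s-1\to s$ chains the inductive inclusion, \eqref{l_image} (via the ampleness identity $mD+D_{\leq t}+P = mD+K_X+(\Delta^t+D_{\leq t-1}+rA)+nQ$), (2) of \eqref{l_id}, and then \eqref{t_generation} to convert an inclusion of twisted pushforwards into an inclusion of ideals. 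Your substitute description — "$D$ built from $(1-1/m)$ times a general member of $|mD|$ plus $B$" — is not a workable replacement for this telescoping, and "passing to the asymptotic limit via \eqref{d_asymptotic}" does not enter: the inclusions are among asymptotic multiplier ideals at every stage. Your reading of $p=qn+r$ is also off: the $qn$ is there because \eqref{t_generation} (Castelnuovo–Mumford regularity) needs $n$ copies of a very ample divisor for global generation, not because of "$n$ inductive or resolution steps," and $rA$ is exactly what makes $\Delta^t+D_{\leq t-1}+rA$ ample via the hypothesis on $(j-1)K_X+\Xi+rA$ (note $D_{\leq t-1}$ is a sum of $t-1$ copies of $K_X$ plus a divisor $\Xi$ with $0\leq\Xi\leq(t-1)\rup\Delta.$, which is why the hypothesis is phrased as it is). Finally, the "moreover" statement is not a separate argument: in the paper it simply drops out as the middle inclusion in the chain at the top level $s=k$.

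Your borrowing of "$(S,\Omega+\tfrac{k-1}{m}P|_S)$-type pairs stay klt" is from the proof of \eqref{t_squeeze}, not of \eqref{t_multiplier}; it plays no role here.
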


We will need some results about the sheaves $\mathcal{J}_{\Delta,c\cdot V\bd}$, most of
which are easy generalisations of the corresponding facts for the usual asymptotic
multiplier ideal sheaves.
\begin{lemma}\label{l_id} Let $\pi\colon\map X.Z.$ be a projective morphism 
to a normal affine variety $Z$ and let $D$ be a $\mathbb{Q}$-Cartier divisor.  Suppose
that $(X,\Delta)$ is log smooth, $\Delta $ is reduced and the stable base locus of $D$
contains no log canonical centre of $(X,\Delta)$.  Then
\begin{enumerate}
\item for any real numbers $0<c_1\leq c_2$ there is a natural inclusion
$$
\mathcal{J}_{\Delta, c_2 \cdot \|D\|}\subset \mathcal{J}_{\Delta, c_1 \cdot \|D\|}, 
$$
and 
\item if $D$ is Cartier and $S$ is a component of $\Delta$,
then the image of the map
$$
\map {\pi_*\ring X.(D)}.{\pi _*\ring S.(D)}.,
$$
is contained in $\pi_*\mathcal{J}_{\Theta,\|D\|_S}(D)$ where $\Theta =(\Delta -S)|_S$.
\end{enumerate}
\end{lemma}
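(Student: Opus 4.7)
The plan is that both assertions are direct consequences of the definition of the (asymptotic) multiplier ideal sheaf. Part (1) is pure monotonicity after passing to a common log resolution, while part (2) amounts to reading off the vanishing order on $S$ of a restricted section of $\ring X.(D)$ against the asymptotic fixed divisor on $S$.

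For (1), by \eqref{d_asymptotic} applied to $V_m=|mD|$, there is a positive integer $p$ sufficiently divisible that, on a common log resolution $\mu\colon\map Y.X.$ of $V_p$ and $(X,\Delta)$,
$$
\mathcal{J}_{\Delta,c_i\cdot\|D\|}=\mathcal{J}_{\Delta,\tfrac{c_i}{p}\cdot V_p}=\mu_*\ring Y.(E-\rdown\tfrac{c_i}{p}F_p.),\qquad i=1,2,
$$
where $K_Y+\Gamma=\mu^*(K_X+\Delta)+E$ and $F_p$ is the fixed divisor of $\mu^*V_p$. Since $F_p\geq 0$ and $c_1\leq c_2$, one has $\rdown(c_1/p)F_p.\leq\rdown(c_2/p)F_p.$, and the induced inclusion of line bundles on $Y$ pushes down to the required inclusion of ideal sheaves on $X$.

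For (2), take any $s\in H^0(X,\ring X.(D))$. If $s|_S=0$ there is nothing to prove, so assume $s|_S\neq 0$; then $S$ is not contained in the divisor $\Sigma\in|D|$ of $s$ and $\Sigma|_S\in W_1=|D|_S$. Choose $m$ sufficiently large and divisible that $\mathcal{J}_{\Theta,\|D\|_S}=\mathcal{J}_{\Theta,\tfrac{1}{m}\cdot W_m}$, and let $\mu\colon\map Y.S.$ be a common log resolution of $W_m$ and $(S,\Theta)$. Let $F_m$ be the fixed divisor of $\mu^*W_m$, and write $K_Y+\Gamma=\mu^*(K_S+\Theta)+E$. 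Since $(S,\Theta)$ is log smooth with $\Theta$ reduced, the log discrepancies of the exceptional divisors of $\mu$ are nonnegative integers, and a direct verification yields $E\geq 0$ and $\mu$-exceptional.

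The key step is the bound $\mu^*(\Sigma|_S)\geq\rdown F_m/m.$. By additivity of $W\bd$ we have $m(\Sigma|_S)\in W_m$, so $\mu^*(m\Sigma|_S)\geq F_m$; dividing by $m$ and invoking the integrality of $\mu^*(\Sigma|_S)$ allows us to round down on the right. Consequently the section $\mu^*(s|_S)$ of $\ring Y.(\mu^*D)$ vanishes along $\rdown F_m/m.$, and since $E\geq 0$ it extends to a section of $\ring Y.(\mu^*D+E-\rdown F_m/m.)$. Pushing forward by $\mu$ and applying the projection formula (using that $D|_S$ is Cartier) gives
$$
s|_S\in\mu_*\ring Y.(\mu^*D+E-\rdown F_m/m.)=\mathcal{J}_{\Theta,\|D\|_S}(D),
$$
and applying $\pi_*$ yields the stated containment. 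The main step I would flag is this descent from $\mu^*(m\Sigma|_S)\geq F_m$ to the floor bound, which exploits the integrality of the pulled-back integral divisor to absorb $\rdown\cdot.$.
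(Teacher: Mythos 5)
Your proof is correct, and your part (1) is exactly the paper's (which dismisses it as immediate from the definitions). For part (2) you take a genuinely different, though closely related, route. The paper resolves $X$: it chooses a log resolution $\mu\colon Y\to X$ of $|D|$, $|pD|$ and $(X,\Delta)$, and sandwiches $E+\mu^*D-\lfloor F_p/p\rfloor$ between $\mu^*D-F_1$ and $E+\mu^*D$, both of whose pushforwards compute $\pi_*\mathcal{O}_X(D)$; restricting to the strict transform $T$ of $S$ then places the image inside $\pi_*\mathcal{J}_{\Theta,\|D\|_S}(D)$. You instead argue section by section on a log resolution of $S$ alone, using superadditivity of the restricted linear systems ($mW_1\subset W_m$) to bound the vanishing of $\mu^*(s|_S)$ from below by $\lfloor F_m/m\rfloor$. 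The underlying mechanism is the same in both proofs: a restricted section vanishes at least along the asymptotic fixed divisor of the restricted system, and $E\geq 0$ absorbs the rounding. Your version has the small advantage of never having to identify the data on $T\subset Y$ with the intrinsic data of a log resolution of $(S,\Theta)$ and $W_p$, an identification the paper's proof uses implicitly; the paper's version is uniform in the sections and stays at the level of direct image sheaves. Two minor remarks: the appeal to integrality of $\mu^*(\Sigma|_S)$ is unnecessary, since $F_m/m\geq\lfloor F_m/m\rfloor$ already gives the floor bound (integrality would only be needed for a round-up); and, as in the paper, one should note that $W_m$ is defined and its base locus contains no log canonical centre of $(S,\Theta)$ because every such centre is a log canonical centre of $(X,\Delta)$ and the stable base locus of $D$ contains none of these.
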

\begin{proof} (1) is immediate from the definitions.  

Suppose that $D$ is Cartier.  Pick an integer $p$ such that
$$
\mathcal{J}_{\Theta, \|D\|_S}=\mathcal{J}_{\Theta, \frac 1p \cdot |pD|_S},
$$ 
and a log resolution $\mu\colon\map Y.X.$ of $|D|$, $|pD|$ and $(X,\Delta )$.  Let $T$ be
the strict transform of $S$, let $F_1$ be the fixed locus of $\mu^*|D|$ and let $F_p$ be
the fixed locus of $\mu^*|pD|$.  We have
$$
(\pi\circ\mu)_*\ring Y.(\mu^*D-F_1)=\pi_*\ring X.(D)=(\pi\circ\mu)_*\ring Y.(E+\mu^*D).
$$
The first equality follows by definition of $F_1$ and the second follows as $E\geq 0$ is
exceptional.  As there are inequalities
$$
\mu ^*D-F_1\leq \mu ^* D-\rdown F_p/p.\leq E+\mu ^* D-\rdown F_p/p. \leq E+\mu ^* D,
$$
the image of $\pi_*\ring X.(D)$ is equal to the image of 
$$
(\pi\circ\mu)_*\ring Y.(E+\mu ^*D-\rdown F_p/p.).
$$  
Thus the image of $\pi_*\ring X.(D)$ is contained in
\[
(\pi\circ\mu)_* \ring T.(E+\mu^*D-\rdown F_p/p.)=\pi _*\mathcal{J}_{\Theta, \|D\|_S}(D).\qedhere
\]
\end{proof}

\begin{lemma}\label{l_image} Let $\pi\colon\map X.Z.$ be a projective morphism
to a normal affine variety $Z$ and let $D$ be a Cartier divisor.  Suppose that
$(X,\Delta)$ is log smooth and $\Delta$ is reduced. Let $S$ be a component of $\Delta$ and
$\Theta=(\Delta -S)|_S$.  

If $\mathbf{B}_+(D)$ contains no log canonical centres of $(X,\Delta)$ then the image of
the map
$$
\map {\pi_*\ring X.(K_X+\Delta+D)}.{\pi_*\ring S.(K_S+\Theta+D)}.,
$$
contains 
$$
\pi _* \mathcal{J}_{\Theta,\|D\|_S}(K_S+\Theta+D).
$$
\end{lemma}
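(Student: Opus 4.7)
The plan is to reduce the claim to the lifting statement of Lemma~\ref{l_ses}(3). I will construct a $\mathbb{Q}$-Cartier effective divisor $D'$ on $X$, with support not containing any log canonical centre of $(X,\Delta)$, such that
\begin{enumerate}
\item $D-D'$ is ample, and
\item $\mathcal{J}_{\Theta,D'|_S}\supseteq \mathcal{J}_{\Theta,\|D\|_S}$.
\end{enumerate}
Given such a $D'$, Lemma~\ref{l_ses}(3) applied with $N=D$ yields a surjection $\pi_*\mathcal{J}_{\Delta,D'}(K_X+\Delta+D)\twoheadrightarrow \pi_*\mathcal{J}_{\Theta,D'|_S}(K_S+\Theta+D)$ (the relative version follows from the $H^0$-statement because $Z$ is affine). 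Since $\mathcal{J}_{\Delta,D'}\subseteq\ring X.$, every section of the source is an honest section of $\pi_*\ring X.(K_X+\Delta+D)$, and composing with the inclusion provided by (ii) proves the lemma.

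To build $D'$, I exploit the hypothesis on $\mathbf{B}_+(D)$: there is an ample $\mathbb{Q}$-divisor $A$ such that $\mathbf{B}(D-A)$ contains no log canonical centre of $(X,\Delta)$, and hence an effective $\mathbb{Q}$-divisor $E\sim_{\mathbb{Q}}D-A$ can be chosen whose support misses every log canonical centre, in particular $S$. I then pick $p$ large and divisible with $pD$ Cartier and $\mathcal{J}_{\Theta,\|D\|_S}=\mathcal{J}_{\Theta,\frac{1}{p}\cdot|pD|_S}$, a general $G\in|pD|$, and a small rational $\epsilon>0$, and set
$$
D':=\frac{1-\epsilon}{p}G+\epsilon E.
$$
Then $D'\sim_{\mathbb{Q}}(1-\epsilon)D+\epsilon(D-A)=D-\epsilon A$, so $D-D'\sim_{\mathbb{Q}}\epsilon A$ is ample, establishing (i). For (ii), the generality of $G$ in $|pD|$ gives $\mathcal{J}_{\Theta,\frac{1}{p}G|_S}=\mathcal{J}_{\Theta,\frac{1}{p}\cdot|pD|_S}=\mathcal{J}_{\Theta,\|D\|_S}$ by the standard Bertini-type argument for multiplier ideals of a linear system. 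On a common log resolution $\mu\colon Y\to X$ of $|pD|$, $E$, and $(X,\Delta)$, with $T$ the strict transform of $S$ and $\mu^*G=M+F$ the mobile-plus-fixed decomposition, the genericity of $G$ and $E$ can be arranged so that the supports of $M|_T$, $F|_T$, and $(\mu^*E)|_T$ are pairwise disjoint; then for $p$ large and $\epsilon$ small the round-down reduces to $\rdown\mu^*D'|_T.=\rdown\frac{1-\epsilon}{p}F|_T.\leq\rdown\frac{1}{p}F|_T.$, which yields $\mathcal{J}_{\Theta,D'|_S}\supseteq\mathcal{J}_{\Theta,\frac{1}{p}G|_S}=\mathcal{J}_{\Theta,\|D\|_S}$.

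The main technical obstacle is the round-down step in (ii): since the floor function only distributes over a sum when the supports are disjoint, the generic choices of $G\in|pD|$ and $E$, together with a sufficiently refined log resolution, must be used carefully to guarantee pairwise disjointness of $M|_T$, $F|_T$, and $(\mu^*E)|_T$. Without this, one only obtains $\rdown\mu^*D'|_T.\geq\rdown\frac{1-\epsilon}{p}F|_T.$, which would push $\mathcal{J}_{\Theta,D'|_S}$ in the wrong direction; handling exceptional components potentially shared by the three pullbacks, where disjointness may fail by generality alone, is the subtle point.
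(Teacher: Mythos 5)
Your strategy is correct, but it takes a genuinely different route from the paper's. The paper does not reduce to (3) of \eqref{l_ses}; it reruns the vanishing argument directly on a log resolution $\mu\colon\map Y.X.$ of $|pD|$ and $(X,\Delta)$ chosen to be an isomorphism over the log canonical centres: using the hypothesis on $\mathbf{B}_+(D)$ to write $A+B\in |pD|$ with $A$ general very ample and $B\geq 0$ avoiding the log canonical centres, it sets $G=K_Y+\Gamma+\mu^*D-\rdown F_p/p.$, perturbs by $\delta(C+F)$ where $C=\mu^*B-F_p\geq 0$ and $F$ is exceptional, and applies Kawamata--Viehweg vanishing to kill $R^1\phi_*\mathcal{O}_Y(G-T)$; the target $\phi_*\mathcal{O}_T(G)$ is then \emph{exactly} $\pi_*\mathcal{J}_{\Theta,\|D\|_S}(K_S+\Theta+D)$. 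You instead manufacture a single effective $\mathbb{Q}$-divisor $D'\sim_{\mathbb{Q}}D-\epsilon A$ to which \eqref{l_ses} applies and sandwich the asymptotic multiplier ideal inside $\mathcal{J}_{\Theta,D'|_S}$. Both proofs use the $\mathbf{B}_+$ hypothesis in the same way (an ample-plus-effective decomposition of $D$ missing the log canonical centres); yours is more modular and reuses \eqref{l_ses}, while the paper's lands on the asymptotic ideal on the nose without an auxiliary comparison of multiplier ideals.

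On the step you flag: pairwise disjointness of $M|_T$, $F|_T$ and $(\mu^*E)|_T$ is neither achievable nor needed. It is not achievable because $E$ is an essentially fixed effective representative of $D-A$: divisorial components of $\mathbf{B}(D-A)$ appear in \emph{every} such $E$ and may well coincide with fixed components of $|pD|$, so no amount of generality separates $\mu^*E$ from $F$. It is not needed because you may first fix a log resolution $g$ of $(S,\Theta)$, $|pD|_S$, $G|_S$ and $E|_S$ --- the support of $D'|_S$ is independent of $\epsilon$, so the resolution is too --- and only then shrink $\epsilon$. At each of the finitely many relevant prime divisors $P$, writing $f=\mult_P\bigl(g^*(F_p)/p\bigr)$ and $e=\mult_P\bigl(g^*(E|_S)\bigr)$, the coefficient of $g^*(D'|_S)$ away from the mobile part is $f+\epsilon(e-f)$, and
$$
\lfloor f+\epsilon(e-f)\rfloor\leq \lfloor f\rfloor \quad\text{as soon as}\quad \epsilon(e-f)<1-\{f\},
$$
which holds for all $P$ once $\epsilon$ is small enough (on components of the general member of the free mobile part both sides vanish since $1/p<1$, by the usual Bertini argument). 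With that adjustment your argument is complete.
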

\begin{proof} Pick an integer $p>1$ such that
$$
\mathcal{J}_{\Theta,\|D\|_S}=\mathcal{J}_{\Theta, \frac 1 p \cdot |pD|_S},
$$
and there is a divisor $A+B\in |pD|$ where $A\geq 0$ is a general very ample divisor and
$B\geq 0$ contains no log canonical centres of $(X,\Delta)$.  By the resolution lemma of
\cite{Szabo94}, we may find a log resolution $\mu\colon\map Y.X.$ of $|pD|$ and of
$(X,\Delta)$ which is an isomorphism over every log canonical centre of $(X,\Delta)$.  Let
$F_p$ be the fixed divisor of $\mu^*|pD|$, $M_p=p\mu ^*D-F_p$ and let $\Gamma$ and $T$ be
the strict transforms of $\Delta$ and $S$.  We have a short exact sequence
$$
\ses {\ring Y.(G-T)}.{\ring Y.(G)}.{\ring T.(G)}.,
$$
where $G=K_Y+\Gamma+\mu ^*D-\rdown F_p/p.$.  As $\mu^*A$ is base point free and
$\mu^*(A+B)\in \mu^*|pD|$, the divisor $C:=\mu^*B-F_p$ is effective.  Note that $M_p-C
\sim \mu^*A$.  As no component of $C$ is a component of $\Gamma$, we may pick
$0<\delta\leq 1/p$ and an exceptional $\mathbb{Q}$-divisor $F\geq 0$ such that
$(Y,\Gamma-T+\{F_p/p\}+\delta (C+F))$ is divisorially log terminal and $\mu^*A-F$ is
ample.  As $|M_p|$ is free, $M_p/p$ is nef and so
\begin{align*} 
G-T-(K_Y+\Gamma-T+\{\frac 1pF_p\}+\delta(C+F))&=\frac 1pM_p-\delta(C+F) \\
                                              &\sim_{\mathbb{Q}}(\frac 1p-\delta)M_p+\delta(\mu^*A-F),
\end{align*} 
is ample.  In particular Kawamata-Viehweg vanishing implies that $R^1\phi_*\ring
Y.(G-T)=0$ where $\phi=\pi\circ\mu$. Therefore the homomorphism
\[
\pi_*\ring X.(K_X+\Delta+D)\supset\phi_*\ring Y.(G)\longrightarrow\phi_*\ring T.(G)=\pi _*\mathcal{J}_{\Theta,\|D\|_S}(K_S+\Theta+D),
\] is surjective.  \end{proof}

\begin{theorem}\label{t_generation} Let $\pi\colon\map X.Z.$ be a projective morphism,
where $Z$ is affine and $X$ is a smooth variety of dimension $n$.  

If $D$ is a Cartier divisor whose stable base locus is a proper subset of $X$, $A$ is an
ample Cartier divisor and $H$ is a very ample divisor then
$\mathcal{J}_{\|D\|}(D+K_X+A+nH)$ is globally generated.
\end{theorem}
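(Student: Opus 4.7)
The plan is to apply Castelnuovo-Mumford regularity with respect to the very ample divisor $H$ and reduce the required vanishings to Kawamata-Viehweg on a log resolution. Setting $\mathcal{F}=\mathcal{J}_{\|D\|}(D+K_X+A+nH)$, Mumford's theorem tells us that $\mathcal{F}$ is globally generated as soon as
$$
H^i(X,\mathcal{F}(-iH)) \;=\; H^i(X,\mathcal{J}_{\|D\|}(D+K_X+A+(n-i)H)) \;=\; 0 \qquad (i=1,\dots,n),
$$
so the task reduces to establishing these $n$ cohomology vanishings.

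To verify each one I would unwind the asymptotic multiplier ideal. Because the stable base locus of $D$ is a proper subset of $X$, the linear system $|pD|$ is non-empty for all sufficiently divisible $p>0$, and by \eqref{d_asymptotic} we may choose $p$ large and divisible enough that $\mathcal{J}_{\|D\|}=\mathcal{J}_{\frac{1}{p}\cdot|pD|}$. Fix a log resolution $\mu\colon\map Y.X.$ of $|pD|$ and write $\mu^*(pD)=M_p+F_p$, where $|M_p|$ is base point free and $F_p$ is SNC. Since $\Delta=0$ is vacuously reduced, the divisors of log discrepancy zero give $\Gamma=0$ and $E=K_Y-\mu^*K_X$, so the projection formula yields
$$
\mathcal{J}_{\|D\|}(D+K_X+A+(n-i)H) \;=\; \mu_*\ring Y.\!\bigl(K_Y+\tfrac{1}{p}M_p+\{F_p/p\}+\mu^*(A+(n-i)H)\bigr).
$$

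The integer divisor appearing on the right has the shape $K_Y+\{F_p/p\}+N$ with $N=\tfrac{1}{p}M_p+\mu^*(A+(n-i)H)$. The fractional part $\{F_p/p\}$ is SNC with coefficients strictly less than one, so $(Y,\{F_p/p\})$ is kawamata log terminal, while $N$ is nef (the sum of the nef $\tfrac{1}{p}M_p$, since $|M_p|$ is free, and the pullback of the ample divisor $A+(n-i)H$ for $0\leq i\leq n$) and big (it dominates $\mu^*A$). Moreover $N$ is $\mu$-nef and $\mu$-big, because $|M_p|$ is globally generated and $\mu$ is birational. Kawamata-Viehweg vanishing therefore delivers both the relative vanishing $R^j\mu_*\ring Y.(K_Y+\{F_p/p\}+\lceil N\rceil)=0$ for $j>0$ and the absolute vanishing $H^j(Y,\ring Y.(K_Y+\{F_p/p\}+\lceil N\rceil))=0$ for $j>0$. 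The Leray spectral sequence then gives the required vanishing of $H^i$ on $X$.

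The one delicate point is positivity of $N$ in the extreme case $i=n$: no copy of $H$ then contributes, so one uses crucially that $A$ is ample (not merely nef), which guarantees that $\mu^*A$ is still big after pulling back by a birational morphism, and hence that $N$ remains big. Everything else amounts to routine manipulation of the defining formulae for the asymptotic multiplier ideal.
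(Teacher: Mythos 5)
Your proof is correct and follows essentially the same route as the paper: the paper combines its Nadel vanishing statement (Lemma~\ref{l_ses}(2)), applied after writing $\mathcal{J}_{\|D\|}=\mathcal{J}_{\frac1p\cdot|pD|}$, with the Castelnuovo--Mumford-type criterion of Lemma~\ref{l_generation}. You merely inline the proof of Nadel vanishing by running Kawamata--Viehweg directly on the log resolution, and cite Mumford's regularity theorem instead of Lemma~\ref{l_generation}; both substitutions are sound.
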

\begin{proof} Pick an integer $p>0$ such that if $pB\in |pD|$ is a general element,
then 
$$
\mathcal{J}_{\|D\|}=\mathcal{J}_{\frac 1p \cdot |pD|}=\mathcal{J}_{B}.
$$
Then by (2) of \eqref{l_ses}, $H^i(X,\mathcal{J}_{\|D\|}(D+K_X+A+mH))=0$ for all $i>0$ and
$m\geq 0$ and we may apply \eqref{l_generation}.  \end{proof}

\begin{lemma}\label{l_generation} Let $\pi\colon\map X.Z.$ be a projective morphism
where $X$ is smooth of dimension $n$, $Z$ is affine and let $H$ be a very ample divisor.

If $\mathcal{F}$ is any coherent sheaf such that $H^i(X,\mathcal{F}(mH))=0$, for $i>0$ and
for all $m\geq -n$ then $\mathcal{F}$ is globally generated.
\end{lemma}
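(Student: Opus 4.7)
The plan is to prove this by induction on $n = \dim X$; the statement is essentially Castelnuovo--Mumford regularity, stated with respect to the very ample divisor $H$ on $X$ rather than on projective space.

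For the base case $n = 0$, the morphism $\pi \colon X \to Z$ is projective and of relative dimension zero, hence finite. Since $Z$ is affine, so is $X$, and every coherent sheaf on an affine scheme is globally generated by its sections; the cohomology hypothesis is vacuous here.

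For the inductive step, I pick a general $Y \in |H|$ missing the finitely many associated points of $\mathcal{F}$, so that multiplication by a defining section of $Y$ is injective on $\mathcal{F}$ and we obtain
$$0 \to \mathcal{F}(-H) \to \mathcal{F} \to \mathcal{F}|_Y \to 0.$$
Twisting by $mH$ and chasing the long exact cohomology sequence, the hypothesis $H^i(X, \mathcal{F}(mH)) = 0$ for $i > 0$ and $m \geq -n$ yields $H^i(Y, \mathcal{F}|_Y(mH|_Y)) = 0$ for $i > 0$ and $m \geq -(n-1)$. Since $Y$ is smooth of dimension $n-1$ with $H|_Y$ very ample, the inductive hypothesis applies and $\mathcal{F}|_Y$ is globally generated on $Y$.

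From the long exact sequence with $m = 0$, the vanishing $H^1(X, \mathcal{F}(-H)) = 0$ (valid since $-1 \geq -n$ whenever $n \geq 1$) yields surjectivity of $H^0(X, \mathcal{F}) \to H^0(Y, \mathcal{F}|_Y)$. Applying the same reasoning to each twist $\mathcal{F}(jH)$ for $j \geq 0$, the multiplication map
$$H^0(X, \mathcal{F}(jH)) \otimes H^0(X, \mathcal{O}_X(H)) \to H^0(X, \mathcal{F}((j+1)H))$$
is surjective: one lifts sections of $\mathcal{F}|_Y((j+1)H|_Y)$ using the inductive surjectivity on $Y$ together with the cohomological vanishing on $X$. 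Iterating, the composite
$$H^0(X, \mathcal{F}) \otimes H^0(X, \mathcal{O}_X(mH)) \to H^0(X, \mathcal{F}(mH))$$
is surjective for every $m \geq 0$. For $m$ sufficiently large, $\mathcal{F}(mH)$ is globally generated by Serre's theorem; combining this with the surjectivity above and using that $H$ itself is very ample, one concludes that the evaluation map $H^0(X, \mathcal{F}) \otimes \mathcal{O}_X \to \mathcal{F}$ is surjective at every point, as desired.

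The main obstacle is orchestrating the multiplication-map surjectivity on $X$ from its analogue on $Y$: this is the technical heart of Castelnuovo--Mumford regularity and requires a simultaneous induction that tracks both the cohomological vanishing and the multiplication statement across all non-negative twists, rather than deducing global generation of $\mathcal{F}$ directly from global generation of $\mathcal{F}|_Y$ (which on its own only controls $\mathcal{F}$ along $Y$, not on all of $X$).
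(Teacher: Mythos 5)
Your proof is correct in outline, but it takes a genuinely different and heavier route than the paper: you run the full Castelnuovo--Mumford regularity argument (slice by a general $Y\in |H|$ avoiding the associated points, establish surjectivity of the multiplication maps $H^0(X,\mathcal{F}(jH))\otimes H^0(X,\mathcal{O}_X(H))\to H^0(X,\mathcal{F}((j+1)H))$ by a simultaneous induction, then invoke Serre's theorem for $\mathcal{F}(mH)$, $m\gg 0$, and descend). The step you correctly identify as the technical heart --- the multiplication-map surjectivity, which forces you to strengthen the inductive hypothesis on $Y$ beyond mere global generation --- is only sketched in your write-up, though it is the standard Mumford lemma and can be carried out. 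The paper sidesteps all of this with a pointwise trick: to prove generation at a fixed $x\in X$, it first replaces $\mathcal{F}$ by its quotient $\mathcal{G}=\mathcal{F}/\mathcal{T}$ where $\mathcal{T}$ is the torsion supported at $x$ (this does not affect generation at $x$ or the cohomological hypotheses), and then chooses the general member $Y\in|H|$ to pass \emph{through} $x$; injectivity of $\mathcal{F}(-Y)\to\mathcal{F}$ still holds because the torsion at $x$ has been removed. Now global generation of $\mathcal{F}|_Y$ (by induction) controls the stalk at $x$, and the single vanishing $H^1(X,\mathcal{F}(-Y))=0$ lifts the generating sections from $Y$ to $X$, finishing the proof at $x$. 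In short, your approach buys the full regularity package (surjectivity of all multiplication maps), which is more than the lemma asks for; the paper's choice of $Y\ni x$ plus the torsion-removal step yields global generation directly and avoids the multiplication-map machinery entirely. If you keep your route, you should spell out the strengthened inductive statement; alternatively, note that your choice of $Y$ avoiding the associated points is exactly what prevents you from concluding directly, and that letting $Y$ pass through the test point (after killing torsion there) removes the obstacle you describe in your last paragraph.
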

\begin{proof} Pick $x\in X$.  Let $\mathcal{T}\subset\mathcal{F}$ be the torsion subsheaf
supported at $x$, and let $\mathcal{G}=\mathcal{F}/\mathcal{T}$.  Then
$H^i(X,\mathcal{G}(mH))=0$ for $i>0$ and for all $m\geq -n$ and $\mathcal{F}$ is globally
generated if and only if $\mathcal{G}$ is globally generated.  Replacing $\mathcal{F}$ by
$\mathcal{G}$ we may therefore assume that $\mathcal{T}=0$.

Pick a general element $Y\in |H|$ containing $x$.  As $\mathcal{T}=0$ there is an exact
sequence
$$
\ses \mathcal{F}(-Y).\mathcal{F}.\mathcal{G}.,
$$
where $\mathcal{G}=\mathcal{F}\otimes \ring Y.$.  As $H^i(Y,\mathcal{G}(mH))=0$, for $i>0$
and for all $m\geq -(n-1)$, $\mathcal{G}$ is globally generated by induction on the
dimension.  As $H^1(X,\mathcal{F}(-Y))=0$ it follows that $\mathcal{F}$ is globally
generated.  \end{proof}

\begin{proof}[Proof of \eqref{t_multiplier}] We follow the argument of \cite{HM05b} which
in turn is based on the ideas of \cite{Kawamata99}, \cite{Siu98} and \cite{Tsuji99}.

We proceed by induction on $m$.  The statement is clear for $m=0$, and so it suffices to
show that
$$
\mathcal{J}_{\|(m+1)D|_S\|}\subset \mathcal{J}_{\Theta,\|(m+1)D+P\|_S},
$$
assuming that 
$$
\mathcal{J}_{\|tD|_S\|}\subset \mathcal{J}_{\Theta,\|tD+P\|_S} \qquad \text{for all} \qquad t\leq m.  
$$
If $\Delta=\sum \delta _i\Delta _i$, where each $\Delta_i$ is a prime divisor, then for
any $1\leq s\leq k$, put
$$
\Delta^s=\sum _{i\,:\,\delta _i>(k-s)/k}\Delta _i  .
$$
We have 
\begin{itemize} 
\item each $\Delta^s$ is integral, 
\item $\ds{S=\Delta^1\leq \Delta^2 \leq \cdots \leq \Delta^k=\rup \Delta.}$, and
\item $\ds{\Delta=\frac 1k\sum_{s=1}^k \Delta^s}$,
\end{itemize} 
and these properties uniquely determine the divisors $\Delta^s$.  We let
$\Delta^{k+1}=\rup \Delta.$.  We recursively define integral divisors $D_{\leq s}$ by the
rule
$$
D_{\leq s} = \begin{cases} 
          0               & \text{if $s=0$} \\
 K_X+\Delta^s+D_{\leq s-1} &  1\leq s\leq k.
\end{cases}
$$
Note that $D_{\leq k}=D$.  By (1) of \eqref{l_id} there is an inclusion
$$
\mathcal{J}_{\|(m+1)D|_S\|}\subset \mathcal{J}_{\|mD|_S\|},
$$
and so it suffices to prove that there are inclusions
\[
\label{e_star}\mathcal{J}_{\|mD|_S\|}\subset \mathcal{J}_{\Theta^{s+1},\|mD+D_{\leq s}+P\|_S}, \tag{$\star$} 
\]
for $0\leq s\leq k$, where $\Theta^i=(\Delta^i-S)|_S$ for $1\leq i\leq k+1$.  
Thus $\Theta^{k}=\Theta^{k+1}=\Theta$ and $\Theta^1=0$.  

We proceed by induction on $s$. Now
$$
\mathcal{J}_{\|mD|_S\|}\subset \mathcal{J}_{\Theta,\|mD+P\|_S} \subset \mathcal{J}_{\Theta^1,\|mD+P\|_S}.
$$
The first inclusion holds by assumption and since $\Theta^1\leq \Theta$, (2) of
\eqref{l_theory} implies the second inclusion.  Thus \eqref{e_star} holds when $s=0$.

Now suppose that \eqref{e_star} holds for $s\leq t-1$.  Note that 
\begin{align*}
\label{e_dagger} mD+D_{\leq t}+P&=K_X+\Delta^t+(D_{\leq t-1}+P)+mD\\
                               &=mD+K_X+(\Delta^t+D_{\leq t-1}+rA)+nQ,\tag{$\dagger$}
\end{align*}
where, by assumption, both $D_{\leq t-1}+P$ and $\Delta^t+D_{\leq t-1}+rA$ are ample for
any $1\leq t\leq k+1 $.  In particular $\mathbf{B}_{+}(mD+D_{\leq t-1}+P)$ contains no log
canonical centres of $(X,\rup \Delta.)$.  Then
\begin{align*}
\pi_*\mathcal{J}_{\|mD|_S\|}(mD+D_{\leq t}+P)
&\subset \pi _*\mathcal{J}_{\Theta^t,\|mD+D_{\leq t-1}+P\|_S}(mD+D_{\leq t}+P)\\
&\subset \im \left( \pi _* \ring X.(mD+D_{\leq t}+P)\longrightarrow \pi _*\ring S.(mD+D_{\leq t}+P)\right)\\
&\subset \pi _* \mathcal{J}_{\Theta^{t+1},\|mD+D_{\leq t}+P\|_S}(mD+D_{\leq t}+P).
\end{align*} 
The first inclusion holds as we are assuming \eqref{e_star} for $s=t-1$, the second
inclusion holds by \eqref{e_dagger} and \eqref{l_image} and the last inclusion follows
from (2) of \eqref{l_id}.  But \eqref{e_dagger} and \eqref{t_generation} imply that
$$
\mathcal{J}_{\|mD|_S\|}(mD+D_{\leq t}+P),
$$
is generated by global sections and so
\[
\mathcal{J}_{\|mD|_S\|}\subset \mathcal{J}_{\Theta^{t+1}, \|mD+D_{\leq t}+P\|_S}. 
\] 
The inclusion 
$$
\pi _* \mathcal J_{\|mD|_S\|}(mD+P)\subset \im\left(\pi _* \ring X.(mD+P)\to \pi _*
\ring S.(mD+P)\right),
$$ 
is part of the inclusions proved above when $s=k$.  
\end{proof}

\section{Lifting sections}
\label{s_lift}

\begin{lemma}\label{l_lim} Let $D\geq 0$ be a Cartier divisor on a normal variety
$X$, and let $Z\subset X$ be an irreducible subvariety. 

Then
$$
\liminf \frac{\mult_Z(|mD|)}m=\lim \frac{\mult_Z(|m!D|)}{m!}.
$$
\end{lemma}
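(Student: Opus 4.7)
The plan is to recognize that the sequence $a_m := \mult_Z(|mD|)$ is subadditive, and then invoke Fekete's lemma. To set up subadditivity cleanly, I would adopt the convention that $a_m = +\infty$ whenever $|mD|$ is empty. With this convention, the inequality $a_{m+n} \leq a_m + a_n$ is immediate: if $M_1 \in |mD|$ computes $a_m$ and $M_2 \in |nD|$ computes $a_n$, then $M_1 + M_2 \in |(m+n)D|$, and $\mult_Z(M_1+M_2) = \mult_Z(M_1) + \mult_Z(M_2) = a_m + a_n$; the inequality is vacuous if $a_m$ or $a_n$ is $+\infty$.

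Next, Fekete's subadditive lemma, valid for sequences taking values in $[0,\infty]$, yields
$$
\lim_{m\to\infty}\frac{a_m}{m} \;=\; \inf_{m\geq 1}\frac{a_m}{m} \;=:\; L.
$$
In particular the $\liminf$ on the left-hand side of the stated equality is simply $L$ (the full limit exists). Since the factorial sequence $\{m!\}$ is an increasing sequence of positive integers tending to infinity, the subsequence $\{a_{m!}/m!\}$ must also converge to $L$, so the right-hand side equals $L$ as well, proving the claim.

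There is no real obstacle beyond bookkeeping: the only point to watch is the convention for empty $|mD|$, which ensures subadditivity holds without a separate case analysis, after which Fekete's lemma delivers the result.
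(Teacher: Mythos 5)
Your argument is correct. The paper's own proof is shorter and uses less: it observes only that if $a$ divides $b$ then $\mult_Z(|bD|)/b\leq \mult_Z(|aD|)/a$ (because $(b/a)M\in|bD|$ for $M\in|aD|$), so the sequence $\mult_Z(|m!D|)/m!$ is non-increasing and hence convergent to its infimum, while every term $\mult_Z(|mD|)/m$ dominates $\mult_Z(|m!D|)/m!$ since $m\mid m!$; this pins the $\liminf$ to the same value. You instead establish full subadditivity $a_{m+n}\leq a_m+a_n$ and invoke Fekete's lemma, which buys you the strictly stronger conclusion that $\lim_m a_m/m$ exists (not merely the $\liminf$) and equals $\inf_m a_m/m$ --- a fact the paper does not need here but which is true and sometimes useful elsewhere. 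Two small remarks: your convention $a_m=+\infty$ for empty $|mD|$ is harmless but unnecessary, since $D\geq 0$ guarantees $mD\in|mD|$ for all $m$; and both proofs rest on the same underlying additivity $\mult_Z(M_1+M_2)=\mult_Z(M_1)+\mult_Z(M_2)$ at the generic point of $Z$, together with the fact that the minimal multiplicity is computed by a general member of the linear system, so neither route is doing anything the other could not.
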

\begin{proof} Note that if $a$ divides $b$ then 
$$
\frac{\mult_Z(|aD|)}a\geq \frac{\mult_Z(|bD|)}b,
$$
whence the result.  \end{proof}
\begin{lemma}\label{l_achieve} Let $D\subset X$ be a divisor on a smooth variety and $Z$ 
a closed subvariety.  

If $\lim\mult_Z(|m!D|)/m!=0$ then $Z$ is not contained in $\mathbf{B}_-(D)$.
\end{lemma}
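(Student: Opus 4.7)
Let $A$ be an ample divisor on $X$ used in defining $\mathbf{B}_-(D)$.  Since $|D+\epsilon A|_{\mathbb{R}}\supset |D+\epsilon'A|_{\mathbb{R}}+|(\epsilon-\epsilon')A|_{\mathbb{R}}$ for $\epsilon\geq\epsilon'>0$, the closed subsets $\{\mathbf{B}(D+\epsilon A)\}_{\epsilon>0}$ are decreasing in $\epsilon$; hence by Noetherianity the increasing union $\mathbf{B}_-(D)=\bigcup_{\epsilon>0}\mathbf{B}(D+\epsilon A)$ stabilises and equals $\mathbf{B}(D+\epsilon_0 A)$ for all sufficiently small rational $\epsilon_0>0$.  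It therefore suffices to produce, for some rational $\epsilon>0$, an effective $\mathbb{R}$-divisor $F\sim_{\mathbb{R}} D+\epsilon A$ with $Z\not\subset\Supp F$.  For $Z$ of codimension at least two, I reduce to the prime-divisor case by taking a log resolution $\pi\colon Y\to X$ extracting a prime divisor $E$ with $\pi(E)=Z$: the identities $\mult_E(\pi^* F)=\mult_Z(F)$ and $\pi_*\ring Y.=\ring X.$ (so that $|m!\pi^* D|=\pi^*|m!D|$) transfer the hypothesis to $\lim\mult_E(|m!\pi^* D|)/m!=0$, and the codim-one conclusion on $Y$ descends to $Z\not\subset\mathbf{B}_-(D)$ through the birational isomorphism $\pi\colon Y\setminus E\to X\setminus Z$.

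Assume now that $Z$ is a prime divisor, and let $\mu_m=\mult_Z(|m!D|)$, so that $\mu_m/m!\to 0$ by hypothesis.  Pick $F_m\in|m!D|$ achieving $\mult_Z(F_m)=\mu_m$ and decompose $F_m=\mu_m Z+F_m'$ with $\mult_Z(F_m')=0$; then $F_m'/m!$ is an effective $\mathbb{Q}$-representative of the class $D-(\mu_m/m!)Z$ not containing $Z$.  By the openness of the ample cone in $N^1(X)_{\mathbb{R}}$, for $m$ large enough the class $\epsilon A-(\mu_m/m!)Z$ is still ample, hence admits an effective $\mathbb{R}$-representative $H\geq 0$ with $Z\not\subset\Supp H$ (ample classes have empty stable base locus).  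Combining, $F_m'/m!+H$ is an effective $\mathbb{R}$-divisor not containing $Z$ in the class $D+\epsilon A-2(\mu_m/m!)Z$.  Letting $m\to\infty$, one deduces $\lim\mult_Z(|m!(D+\epsilon A)|)/m!=0$.

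The main obstacle is to upgrade this vanishing of the asymptotic multiplicity to the existence of an effective representative of the exact class $D+\epsilon A$ (rather than $D+\epsilon A-2(\mu_m/m!)Z$) with multiplicity zero along $Z$.  This is essentially Nakayama's theorem on the divisorial Zariski decomposition: for a big $\mathbb{R}$-divisor $B$ on a smooth projective variety, the divisorial part of $\mathbf{B}_-(B)$ coincides with the support of $\sum_\Gamma\bigl(\lim\mult_\Gamma(|m!B|)/m!\bigr)\cdot\Gamma$, the sum ranging over prime divisors $\Gamma$.  Applied to $B=D+\epsilon A$ (big for $\epsilon>0$ since $\epsilon A$ is ample), our vanishing along $Z$ gives $Z\not\subset\mathbf{B}_-(D+\epsilon A)$; a final stabilisation argument, analogous to the first paragraph, identifies $\mathbf{B}_-(D+\epsilon A)$ with $\mathbf{B}_-(D)$ for $\epsilon$ small, completing the proof.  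The technical core is Nakayama's compactness-type argument producing effective representatives of big $\mathbb{R}$-divisors with prescribed asymptotic multiplicities.
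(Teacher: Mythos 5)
Your argument has a genuine gap: the step you yourself label ``the technical core'' --- upgrading the vanishing of the asymptotic multiplicity $\lim\mult_Z(|m!(D+\epsilon A)|)/m!=0$ to the existence of an effective divisor in the class $D+\epsilon A$ whose support misses the generic point of $Z$ --- is deferred entirely to Nakayama's theorem identifying the support of $N_\sigma$ with the divisorial part of $\mathbf{B}_-$. But that theorem \emph{is} (the divisorial case of) the statement you are asked to prove, so nothing has actually been established; you have reformulated the problem rather than solved it. The preliminary reduction to the divisorial case is also not watertight: if $G\sim_{\mathbb{R}}\pi^*D+\delta A_Y$ is effective on the resolution $Y$ with $\mult_E G=0$, its pushforward lies in the class $D+\delta\pi_*A_Y$, where $\pi_*A_Y$ is merely big rather than a small ample perturbation of $D$, and $\mult_Z\pi_*G=\mult_E\pi^*\pi_*G$ can be positive even when $\mult_E G=0$; so the conclusion on $Y$ does not descend to $Z\not\subset\mathbf{B}_-(D)$ without further work.

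The paper's proof avoids all of this and treats arbitrary $Z$ directly with the multiplier-ideal machinery already developed: since $\mult_Z D_{mp}/p<1$ for a general $D_{mp}\in|mpD|$ and $p\gg 0$, the asymptotic multiplier ideal $\mathcal{J}_{\|mD\|}=\mathcal{J}_{(1/p)D_{mp}}$ is trivial at the generic point of $Z$; combined with the global generation of $\mathcal{J}_{\|mD\|}(m(D+A))$ from \eqref{t_generation} (Nadel vanishing plus a Castelnuovo--Mumford regularity argument), this produces a section of $m(D+A)$ not vanishing along $Z$ for every ample $A$, which is exactly $Z\not\subset\mathbf{B}_-(D)$. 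If you want a self-contained argument, this is the route to take; indeed Nakayama's own proof of the result you cite proceeds along similar lines.
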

\begin{proof} Let $A$ be any ample divisor.  Pick $l>0$ such that $lA-K_X$ is ample.  If
$m>l$ is sufficiently divisible then $\mathcal{J}_{\|mD\|}(m(D+A))$ is globally generated
by \eqref{t_generation}.  But if $p>0$ is sufficiently large and divisible and $D_{mp}\in
|mpD|$ is general, then $\mult_Z D_{mp}=\mult_Z|mpD|<p$ and
$$
\mathcal J_{\|mD\|}=\mathcal J_{(1/p)D_{mp}}. 
$$ 
But since $\mult_Z D_{mp}/p<1$ it follows that $(X,D_{mp}/p)$ is 
kawamata log terminal, in a neighbourhood of the generic point of $Z$.  Thus $Z$ is not
contained in the co-support of $\mathcal J_{\|mD\|}$ and so $Z$ is not contained in the
base locus of $m(D+A)$.  \end{proof}

\begin{theorem}\label{t_lift} Let $\pi\colon\map X.Z.$ be a projective morphism 
to a normal affine variety $Z$, where $(X,\Delta=S+A+B)$ is a purely log terminal pair,
$S=\rdown\Delta.$ is irreducible, $(X,S)$ is log smooth, $A\geq 0$ is a general ample
$\mathbb{Q}$-divisor, $B\geq 0$ is a $\mathbb{Q}$-divisor and $(S,\Omega+A|_S)$ is
canonical, where $\Omega=(\Delta-S)|_S$.  Assume that the stable base locus of
$K_X+\Delta$ does not contain $S$.  Let $F=\lim F_{l!}$, where, for any positive and
sufficiently divisible integer $m$, we let
\[
F_m=\fix (|m(K_X+\Delta)|_S)/m . 
\]
If $\epsilon>0$ is any rational number such that $\epsilon(K_X+\Delta)+A$ is ample and if
$\Phi$ is any $\mathbb{Q}$-divisor on $S$ and $k>0$ is any integer such that
\begin{enumerate}
\item both $k\Delta$ and $k\Phi$ are Cartier, and
\item $\Omega\wedge \lambda F \leq \Phi\leq \Omega$, where $\lambda=1-\epsilon/k$, 
\end{enumerate}
then
$$
|k(K_S+\Omega-\Phi)|+k\Phi\subset |k(K_X+\Delta)|_S.
$$
\end{theorem}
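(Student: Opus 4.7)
The strategy is to reduce the lifting inclusion via \eqref{t_squeeze} to a higher-level inclusion, and then to establish the higher-level inclusion using the asymptotic multiplier ideal lifting \eqref{t_multiplier}. The canonicity of $(S,\Omega+A|_S)$ ensures that only components of the asymptotic fixed divisor $F$ contained in the support of $\Omega$ obstruct lifting; the hypothesis $\Phi \geq \Omega\wedge\lambda F$ then bounds the vanishing along these components from below, while the slack $\epsilon/k$ is absorbed using the ampleness of $\epsilon(K_X+\Delta)+A$.

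First, set $\Phi_{\mathrm{sq}} := \Omega-\Phi$ and $C := A/k$, and choose an integer $l\gg 0$ divisible enough that $lA$, $lk\Delta$ and $lk\Phi$ are all Cartier. Set $m := kl$ and $P := pA$ where $p = qn + r$ is chosen as in the hypotheses of \eqref{t_multiplier}. For $l$ sufficiently large, the conditions that $C - \frac{k-1}{m}P$ be ample and that $(X,\Delta+\frac{k-1}{m}P)$ be purely log terminal are automatic. Then \eqref{t_squeeze} reduces the conclusion of \eqref{t_lift} to the higher-level inclusion
\[
l|k(K_S+\Omega-\Phi)| + m\Phi + (mC+P)|_S \subset |m(K_X+\Delta+C)+P|_S.
\]
Fix $\Sigma \in |k(K_S+\Omega-\Phi)|$, set $D := k(K_X+\Delta)$ and $L := lD+lA+P$, and let $\tau := l\Sigma + m\Phi + (mC+P)|_S$, which is a divisor on $S$ in the class $L|_S$. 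Applying \eqref{t_multiplier} with the ample part $P$ enlarged to $lA + P$ (the ampleness hypotheses are preserved),
\[
\pi_*\mathcal{J}_{\|lD|_S\|}(L) \subset \im\bigl(\pi_*\ring X.(L) \to \pi_*\ring S.(L)\bigr).
\]
Thus it suffices to show that $\tau \in H^0(S, \mathcal{J}_{\|lD|_S\|}(L))$.

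This multiplier-ideal containment is the heart of the argument. For a prime divisor $E$ of $S$ which is a component of $F$ contained in the support of $\Omega$, the hypothesis $\Phi \geq \Omega\wedge\lambda F$ yields $\mult_E(m\Phi) \geq m\lambda\,\mult_E(F)$, leaving a deficit of $l\epsilon\,\mult_E(F) = m(1-\lambda)\mult_E(F)$ from the target $\lfloor m\,\mult_E(F_{plk})\rfloor$ for $p$ large. This deficit is covered via the combination of the integer rounding inherent in the definition of the asymptotic multiplier ideal (for $l$ divisible and large, the floors stabilise), the ample twist $(mC+P)|_S$ (whose positivity is quantified by the ampleness of $\epsilon(K_X+\Delta)+A$), and any vanishing of $\Sigma$ forced by the base-locus structure of $|k(K_S+\Omega-\Phi)|$. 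For divisors over $S$ not supported on $\Omega$, the canonicity of $(S,\Omega+A|_S)$ forces log discrepancies at least one, so they contribute nothing to the divisorial co-support of $\mathcal{J}_{\|lD|_S\|}$. The main obstacle is making this multiplicity bookkeeping precise: reconciling the slack $\epsilon/k$, the floor rounding, and the limit definition of $F$ against each component of $F$ is delicate, and relies essentially on both the ampleness hypothesis $\epsilon(K_X+\Delta)+A$ ample and the canonicity of $(S,\Omega+A|_S)$.
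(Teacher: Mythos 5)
Your proposal correctly identifies the two endpoints of the argument --- reduce via \eqref{t_squeeze} to an inclusion of the form $l|k(K_S+\Omega-\Phi)|+m\Phi+(mC+P)|_S\subset |m(K_X+\Delta+C)+P|_S$, and use \eqref{t_multiplier} to produce the lift --- but the middle of the proof, which is where all the work is, does not go through as you have set it up. There are two concrete problems. First, you apply \eqref{t_multiplier} directly on $X$ with $D=k(K_X+\Delta)$. That theorem requires the stable base locus of $D$ to contain no log canonical centre of $(X,\rup\Delta.)$, i.e.\ no stratum of $\rup\Delta.=S+\rup A+B.$, and nothing in the hypotheses of \eqref{t_lift} guarantees this. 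The paper's proof exists precisely to get around this: it passes to a log resolution $f\colon\map Y.X.$ of $|l(K_X+\Delta+C)|$ and replaces $\Gamma$ by $\Xi=\Gamma-\Gamma\wedge H_l$, where $H_l$ is the fixed divisor; after this subtraction the mobile part is free and has normal crossings with $\rup\Xi.$, so the hypothesis of \eqref{t_multiplier} holds for $(Y,\Xi)$ by construction. Omitting this step is not a cosmetic difference --- without it the key lemma is simply not applicable.

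Second, your "heart of the argument" --- showing $\tau\in H^0(S,\mathcal{J}_{\|lD|_S\|}(L))$ by multiplicity bookkeeping against each component of $F$ --- is not carried out, and the sketch points in the wrong direction. The canonicity of $(S,\Omega+A|_S)$ does not control the co-support of $\mathcal{J}_{\|lD|_S\|}$: membership in that ideal is governed by the comparison of the discrepancy divisor with $\rdown F_p/p.$ on a resolution of $S$, and exceptional divisors can enter the co-support regardless of canonicity. The paper sidesteps this entirely. On $T$ (the strict transform of $S$ in $Y$) one has the tautological equality $H^0(T,\ring T.(m(K_T+\Xi_T)))=H^0(T,\mathcal{J}_{\|m(K_T+\Xi_T)\|}(m(K_T+\Xi_T)))$, so no membership needs to be checked; \eqref{t_multiplier} applied to $(Y,\Xi)$ then lifts $\sigma\cdot\tau$ to $Y$. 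The comparison with $\lambda F$ happens not inside a multiplier ideal but at the level of the divisor being discarded: the ampleness of $\eta(K_X+\Delta)+C$ (for some $\eta>\epsilon/k$) gives $G_l=\fix(|l(K_X+\Delta+C)|_S)/l\leq\mu F_{\mu l}$ with $\mu=1-\eta<\lambda$, hence $G_l\leq\lambda F$ for suitable $l$, which yields $g_*\Xi_T-C|_S=\Omega-\Omega\wedge G_l\geq\Omega-\Phi$. Canonicity of $(S,\Omega+C|_S)$ is used only to guarantee $g_*|m(K_T+\Xi_T)|=|m(K_S+g_*\Xi_T)|$, i.e.\ that pushing forward from $T$ to $S$ loses no sections. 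These are the steps your proposal would need to supply, and they constitute essentially the entire proof.
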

\begin{proof} By assumption $A=H/m$, where $H$ is very ample and a very 
general element of $|H|$ and $m\geq 2$ is an integer.  If $C=A/k$, then 
$$
A+(k-1)C=\frac{2k-1}{km}H,
$$
and so 
$$
(X,\Delta+(k-1)C=S+\frac{2k-1}{km}H+B) 
$$
is purely log terminal, as 
$$
\frac{2k-1}{km}<1.
$$
On the other hand, 
$$
(S,\Omega+C|_S),
$$
is canonical as we are even assuming that $(S,\Omega+A|_S)$ is canonical.  Pick
$\eta>\epsilon/k$ rational so that $\eta(K_X+\Delta)+C$ is ample and let
$\mu=1-\eta<\lambda=1-\epsilon /k$.  If $l>0$ is any sufficiently divisible integer so
that $O=l(\eta(K_X+\Delta)+C)$ is very ample, then
\begin{align*} 
G_l &=\fix( |l(K_X+\Delta+C)|_S)/l \\ 
    &=\fix(|l\mu (K_X+\Delta)+O|_S)/l \\ 
    &\leq \fix(|l\mu (K_X+\Delta)|_S)/l \\
    &=\mu F_{\mu l}.
\end{align*} 
Thus
$$
\lim G_{l!}\leq\mu\lim F_{l!}=\mu F. 
$$
On the other hand \eqref{l_achieve} implies that there is a positive integer $l$ such that
every prime divisor on $S$ which does not belong to the support of $F$ does not belong to
the base locus of $|l(K_X+\Delta+C)|$.  Thus we may pick a positive integer $l$ such that
\begin{itemize}  
\item $k$ divides $l$, 
\item $lC$ is Cartier, and 
\item $G_l\leq \lambda F$.  
\end{itemize} 

Let $f\colon\map Y.X.$ be a log resolution of the linear system $|l(K_X+\Delta+C)|$ and of
$(X,\Delta+C)$.  We may write
$$
K_Y+\Gamma=f^*(K_X+\Delta+C)+E,
$$
where $\Gamma\geq 0$ and $E\geq 0$ have no common components, $f_*\Gamma =\Delta+C$ and
$f_*E=0$.  Then 
$$
H_l=\fix(l(K_Y+\Gamma))/l=\fix(lf^*(K_X+\Delta+C))/l+E.
$$

If $\Xi=\Gamma-\Gamma\wedge H_l$ then $l(K_Y+\Xi)$ is Cartier and $\fix(l(K_Y+\Xi))$ and
$\Xi$ share no common components.  Since the mobile part of $|l(K_Y+\Xi)|$ is free and the
support of $\fix(l(K_Y+\Xi))+\Xi$ has normal crossings it follows that the stable base
locus of $K_Y+\Xi$ contains no log canonical centres of $(Y,\rup \Xi.)$ (which are nothing
but the strata of $\rup\Xi.$). 

Let $H\geq 0$ be any ample divisor on $Y$.  Pick positive integers $m$ and $q$ such that
$l$ divides $m$ and $Q=qH$ is very ample.  Let $T$ be the strict transform of $S$, let
$\Gamma _T=(\Gamma-T)|_T$ and let $\Xi_T=(\Xi-T)|_T$.  If
$$
\tau\in H^0(T,\ring T.(m(K_T+\Xi _T)))=H^0(T,\mathcal{J}_{\|m(K_T+\Xi_T)\|}(m(K_T+\Xi _T))),
$$
and $\sigma\in H^0(T,\ring T.(Q))$  then 
$$
\sigma\cdot \tau \in H^0(T,\mathcal{J}_{\|m(K_T+\Xi _T)\|}(m(K_T+\Xi _T)+Q)).
$$
On the other hand, if $q$ is sufficiently large and divisible then by \eqref{t_multiplier}
$H^0(T,\mathcal{J}_{\|m(K_T+\Xi_T)\|}(m(K_T+\Xi _T)+Q))$ is contained in the image of
\[
\map {H^0(Y,\ring Y.(m(K_Y+\Xi)+Q))}.{H^0(T,\ring T.(m(K_T+\Xi _T)+Q))}..
\]
Hence there is a fixed $q$ such that whenever $l$ divides $m$, we have
\[
|m(K_T+\Xi _T)|+m(\Gamma _T-\Xi _T)+|Q|_T|\subset |m(K_Y+\Gamma)+Q|_T. 
\]
If $g=f|_T\colon \map T.S.$ then $g_*\Gamma _T=\Omega+C|_S$ and since $g_*\Xi _T\leq
\Omega+C|_S$ and $(S, \Omega+C|_S)$ is canonical, we have $|m(K_S+g_*\Xi_T)|=g_*|m(K_T+\Xi _T)|$.
Therefore, applying $g_*$, we obtain
\[
|m(K_S+g_*\Xi _T)|+m(\Omega+C|_S-g_*\Xi _T)+P|_S\subset |m(K_X+\Delta +C)+P|_S,
\]
where $P=f_*Q$.  

Since for every prime divisor $L$ on $S$ we have
$$
\mult_L G_l=\mult_{L'}\fix(|l(K_Y+\Gamma)|_T)/l=\mult_{L'} H_l|_T,
$$
where $L'$ is the strict transform of $L$ on $T$, it follows that 
$$
g_*\Xi_T-C|_S=\Omega-\Omega \wedge G_l\geq \Omega-\Omega \wedge \lambda F
\geq\Omega-\Phi\geq 0.
$$
Therefore
\[
|m(K_S+\Omega-\Phi)|+m\Phi+(mC+P)|_S \subset |m(K_X+\Delta+C)+P|_S,
\]
for any $m$ divisible by $l$.  In particular if we pick $m$ so that 
$C-\frac{k-1}mP$ is ample and $(X,\Delta+\frac{k-1}mP)$ is purely log terminal 
then the result follows by \eqref{t_squeeze}.  \end{proof}

\section{Rationality of the restricted algebra}
\label{s_rationality}

In this section we will prove:
\begin{theorem}\label{t_rational} Assume Theorem~\ref{t_ezd}$_{n-1}$. 

Let $\pi\colon\map X.Z.$ be a projective morphism to a normal affine variety $Z$, where
$(X,\Delta=S+A+B)$ is a purely log terminal pair of dimension $n$, $S=\rdown\Delta.$ is
irreducible, $(X,S)$ is log smooth, $A\geq 0$ is a general ample $\mathbb{Q}$-divisor,
$B\geq 0$ is a $\mathbb{Q}$-divisor and $(S,\Omega+A|_S)$ is canonical, where
$\Omega=(\Delta-S)|_S$.  Assume that the stable base locus of $K_X+\Delta$ does not
contain $S$.  Let $F=\lim F_{l!}$ where, for any positive and sufficiently divisible
integer $m$, we let
\[
F_m=\fix (|m(K_X+\Delta)|_S)/m.
\]

Then $\Theta=\Omega-\Omega\wedge F$ is rational.  In particular if both $k\Delta$ and
$k\Theta$ are Cartier then
$$
|k(K_S+\Theta)|+k(\Omega-\Theta)=|k(K_X+\Delta)|_S,
$$
and
$$
R_S(X,k(K_X+\Delta))\simeq R(S,k(K_S+\Theta)).
$$
\end{theorem}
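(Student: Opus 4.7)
The plan is to establish rationality of $\Theta$ by a Diophantine approximation argument, after which the remaining claims follow from a direct application of \eqref{t_lift} combined with the tautological inclusion.

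First, for each sufficiently divisible $m$ I apply \eqref{t_lift} with $\Phi=\Omega\wedge F_m$, noting that $\Omega\wedge\lambda F\le\Omega\wedge F_m$ holds since $\lambda F\le F\le F_m$. This yields
\[
|m(K_S+\Theta_m)|+m(\Omega-\Theta_m)\subset|m(K_X+\Delta)|_S,
\]
with $\Theta_m:=\Omega-\Omega\wedge F_m$. The reverse inclusion is tautological, since every element of $|m(K_X+\Delta)|_S$ dominates $\fix(|m(K_X+\Delta)|_S)=mF_m\ge m\Omega\wedge F_m$, so the equality holds. Taking fixed parts, $\fix(|m(K_S+\Theta_m)|)=m(F_m-\Omega\wedge F_m)$ has support disjoint from the components of $\Theta_m$. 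Letting $m=l!\to\infty$, $\Theta_m\to\Theta$, and $K_S+\Theta$ is pseudo-effective as a limit of the pseudo-effective classes $K_S+\Theta_m$; since $A$ is general ample, $\Theta\ge A|_S$, so $(S,\Theta)$ admits an ample-plus-effective decomposition to which Theorem~\ref{t_ezd}$_{n-1}$ can be applied.

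Next, Theorem~\ref{t_ezd}$_{n-1}$(2) applied to $(S,\Theta)$ produces $\delta>0$ such that $\mathbf{B}(K_S+\Theta)\subset\mathbf{B}(K_S+\Xi)$ for every $\Xi\in\mathcal{L}_{A|_S}(V)$ with $\|\Xi-\Theta\|<\delta$. For $m$ large enough that $\|\Theta_m-\Theta\|<\delta$, and for any component $G$ of $\Theta$ (which is also a component of $\Theta_m$ for $m$ large), the vanishing of $\mult_G\fix(|m(K_S+\Theta_m)|)$ implies $G\not\subset\mathbf{B}(K_S+\Theta_m)$, and the contrapositive of (2) forces $G\not\subset\mathbf{B}(K_S+\Theta)$. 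Hence $\mathbf{B}(K_S+\Theta)$ contains no components of $\Theta$.

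Now suppose for contradiction that $\Theta$ is irrational, and let $W$ be the smallest affine subspace of $\WDiv_{\mathbb{R}}(S)$ defined over $\mathbb{Q}$ containing $\Theta$, so that $\dim W>0$. Fix $\epsilon>0$ rational with $\epsilon(K_X+\Delta)+A$ ample, and let $\eta,r$ be the constants supplied by Theorem~\ref{t_ezd}$_{n-1}$(3) applied to $(S,\Theta)$. By Dirichlet-type approximation inside $W$, I will produce a $\mathbb{Q}$-divisor $\Phi\in W$ and an integer $k$ (with $r\mid k$ and $k\Delta$, $k\Phi$ Cartier) satisfying $\|\Phi-\Theta\|<\eta$, $\Phi\le\Omega-\Omega\wedge\lambda F$ with $\lambda=1-\epsilon/k$, and $\mult_G\Phi>\mult_G\Theta$ for some prime $G$ whose existence is forced by the irrationality of $\Theta$; such a $G$ is automatically a component of $\Theta$. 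Applying \eqref{t_lift} with $\Omega-\Phi$ in the role of its $\Phi$ yields the equality $|k(K_S+\Phi)|+k(\Omega-\Phi)=|k(K_X+\Delta)|_S$. Equating fixed divisors, $\mult_G\fix(|k(K_S+\Phi)|)=k(f_G^{(k)}+\mult_G\Phi-\mult_G\Omega)$, which is strictly positive because $f_G^{(k)}\ge f_G$ and $\mult_G\Phi>\mult_G\Theta=\mult_G\Omega-f_G$. Thus $G$ is a component of $\fix(k(K_S+\Phi))$, and Theorem~\ref{t_ezd}$_{n-1}$(3), applied with $\Xi=\Phi\in W$, forces $G$ into $\mathbf{B}(K_S+\Theta)$, contradicting the previous paragraph. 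With $\Theta$ rational, the stated equality $|k(K_S+\Theta)|+k(\Omega-\Theta)=|k(K_X+\Delta)|_S$ follows in every admissible degree from \eqref{t_lift} with $\Phi=\Omega\wedge F$ and the tautological inclusion, and the algebra isomorphism $R_S(X,k(K_X+\Delta))\simeq R(S,k(K_S+\Theta))$ then follows by dividing through each graded piece by the canonical section of the fixed divisor $mk(\Omega-\Theta)$. The principal obstacle is the Diophantine step: the overshoot $\mult_G\Phi-\mult_G\Theta$ must land inside the window of width $(\epsilon/k)f_G$ permitted by \eqref{t_lift}, while $k$ simultaneously serves as denominator of $\Phi\in W$; the saving grace is that Dirichlet's theorem along a convergent subsequence supplies overshoots of order $1/k^2$, which undercuts $(\epsilon/k)f_G$ once $k>1/(\epsilon f_G)$.
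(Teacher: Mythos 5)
Your argument is correct and follows essentially the same route as the paper: the tautological inclusion shows no component of $\Theta_m$ lies in $\fix(|m(K_S+\Theta_m)|)$, part (2) of Theorem~\ref{t_ezd}$_{n-1}$ transfers this to $\Theta$, and then Diophantine approximation in the smallest rational affine subspace $W$ combined with \eqref{t_lift} and part (3) of Theorem~\ref{t_ezd}$_{n-1}$ produces the contradiction, with the same $\epsilon f_G/k$ versus $1/k^2$ comparison driving the overshoot. The only points left at the paper's own level of informality are the precise statement of the simultaneous approximation lemma and the justification that $(S,\Theta)$ has the ample-plus-effective form required to invoke Theorem~\ref{t_ezd}$_{n-1}$.
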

\begin{proof} Suppose that $\Theta$ is not rational.  Let $V\subset\WDiv_{\mathbb{R}}(S)$
be the vector space spanned by the components of $\Theta$.  Then there is a constant
$\delta>0$ such that if $\Phi\in V$ and $\|\Phi-\Theta\| <\delta$ then $\Phi\geq 0$ has
the same support as $\Theta$ and moreover, by (2) of Theorem~\ref{t_ezd}$_{n-1}$, if $G$
is a prime divisor contained in the stable base locus of $K_S+\Theta$ then it is also
contained in the stable base locus of $K_S+\Phi$.

If $l(K_X+\Delta)$ is Cartier and $\Theta_l=\Omega-\Omega\wedge F_l$ then 
$$
|l(K_X+\Delta)|_S\subset |l(K_S+\Theta_l)|+l(\Omega\wedge F_l).
$$
Hence $\fix (l(K_S+\Theta_l))$ does not contain any components of $\Theta_l$.  In
particular the stable base locus of $K_S+\Theta_l$ does not contain any components of
$\Theta_l$.  But we may pick $l>0$ so that $\Theta_l\in V$ and
$\|\Theta_l-\Theta||<\delta$.  It follows that no component of $\Theta$ is in the stable
base locus of $K_S+\Theta$.

Let $W\subset V$ be the smallest rational affine space which contains $\Theta$.
(3) of Theorem~\ref{t_ezd}$_{n-1}$ implies that there is a positive integer $r>0$ and a positive
constant $\eta>0$ such that if $\Phi\in W$, $k\Phi/r$ is Cartier and
$\|\Phi-\Theta\|<\eta$ then every component of $\fix (k(K_S+\Phi))$ is in fact a component
of the stable base locus of $K_S+\Theta$.

Pick a rational number $\epsilon>0$ such that $\epsilon(K_X+\Delta)+A$ is ample.  By
Diophantine approximation, we may find a positive integer $k$, a divisor $\Phi$ on $S$ and
a prime divisor $G$ (necessarily a component of $\Theta$ whose coefficient is irrational)
such that
\begin{enumerate}
\item $0\leq \Phi\in W$, 
\item both $k\Phi/r$ and $k\Delta/r$ are Cartier, 
\item $\|\Phi-\Theta\|<\min(\delta,\eta,f\epsilon/k)$ where $f$ is the smallest non-zero
coefficient of $F\neq 0$, and
\item $\mult_G\Phi>\mult_G \Theta$. 
\end{enumerate} 

\begin{claim}\label{c_component} $\Omega\wedge \lambda F\leq \Omega-\Phi \leq \Omega$,
where $\lambda=1-\epsilon/k$.   
\end{claim}
\begin{proof}[Proof of \eqref{c_component}] Let $P$ be a prime divisor on $S$ and let
$\omega$, $f$, $\phi$ and $\theta$ be the multiplicities of $\Omega$, $F$, $\Phi$ and
$\Theta$ along $P$.  We just need to check that 
\[
\label{e_component}\min(\omega,\lambda f)\leq \omega-\phi. \tag{$*$}
\]

There are two cases.  If $\omega \leq f$, then $\theta=0$ so that $\phi =0$ and
\eqref{e_component} holds.  If $\omega \geq f$, then $\theta =\omega -f$ and since
$\|\Phi-\Theta\|<f\epsilon/k$,
\[
\min (\omega ,\lambda f)=\left(1-\frac \epsilon k\right)f \leq f-(\phi -\theta )=\omega -\phi. \qedhere
\]
\end{proof}

\eqref{c_component}, (2) and \eqref{t_lift} imply that 
$$
|k(K_S+\Phi)|+k(\Omega-\Phi)\subset |k(K_X+\Delta)|_S.
$$
(4) implies that $G$ is a component of $\fix(k(K_S+\Phi))$.  (2) and
$\|\Phi-\Theta\|<\eta$ imply that $G$ is a component of the stable base locus of
$K_S+\Theta$, a contradiction.

Thus $\Theta$ is rational.  Hence $\Omega\wedge F$ is rational, and we are done by
\eqref{t_lift}.  \end{proof}

\section{Proof of \eqref{t_m}}
\label{s_pl}

\begin{theorem}\label{t_model} Assume Theorem~\ref{t_ezd}$_{n-1}$.

Let $\pi\colon\map X.Z.$ be a projective morphism to a normal affine variety $Z$.  Suppose
that $(X,\Delta=S+A+B)$ is a purely log terminal pair of dimension $n$, $S=\rdown \Delta.$
is irreducible and not contained in the stable base locus of $K_X+\Delta$, $A\geq 0$ is a
general ample $\mathbb{Q}$-divisor and $B\geq 0$ is a $\mathbb{Q}$-divisor.

Then there is a birational morphism $g\colon\map T.S.$, a positive integer $l$ and a
kawamata log terminal pair $(T,\Theta)$ such that $K_T+\Theta$ is $\mathbb{Q}$-Cartier and
$$
R_S(X,l(K_X+\Delta))\cong R(T,l(K_T+\Theta)).
$$
\end{theorem}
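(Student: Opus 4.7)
The plan is to reduce Theorem~\ref{t_model} to Theorem~\ref{t_rational} by passing to a suitable log resolution where the extra hypotheses (log smoothness of $(Y,T)$, general ample part, and canonicity of the trace) of the latter are satisfied.

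First, I would take a log resolution $f\colon\map Y.X.$ of $(X,\Delta)$, sufficiently refined that the restriction $g = f|_T\colon\map T.S.$, with $T$ the strict transform of $S$, is a log resolution and that the trace on $T$ of the exceptional boundary is canonical; any log resolution can be blown up further along centres in $T$ until this trace-canonicity holds. Writing
\[
K_Y + T + B_Y = f^*(K_X+\Delta) + E,
\]
with $B_Y \geq 0$ supported away from $T$, $E\geq 0$ exceptional, and $B_Y,E$ having no common component, the pair $(Y, T+B_Y)$ is purely log terminal and log smooth.

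Next I would produce a general ample piece on $Y$. Since $A$ is a general ample $\mathbb{Q}$-divisor on $X$, a standard perturbation of its pullback yields $f^*A \sim_{\mathbb{Q}} A_Y + F$, where $A_Y$ is a small general ample $\mathbb{Q}$-divisor on $Y$ and $F\geq 0$ is exceptional with small coefficients. Absorbing $F$ into $B_Y$ while keeping coefficients in $[0,1)$, let $B_Y'$ be the adjusted divisor and set $\Delta_Y = T + A_Y + B_Y'$. Then $(Y,\Delta_Y)$ is purely log terminal with $\rdown{\Delta_Y}. = T$, and
\[
K_Y + \Delta_Y \sim_{\mathbb{Q}} f^*(K_X+\Delta) + E',
\]
for some effective $f$-exceptional $E'$ sharing no components with $\Delta_Y$. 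By the canonicity of the trace $(T, B_Y|_T)$ arranged in Step~1, together with the generality of $A_Y$ (so that the further restriction $A_Y|_T$ on $T$ is a small general ample $\mathbb{Q}$-divisor and preserves canonicity via Bertini), the pair $(T, \Omega_Y + A_Y|_T)$ is canonical, where $\Omega_Y = (\Delta_Y - T)|_T$. Finally, since $S\not\subset \mathbf{B}(K_X+\Delta)$ by hypothesis and $T$ is not a component of $E'$, we have $T\not\subset \mathbf{B}(K_Y+\Delta_Y)$.

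Because $E'$ is effective and $f$-exceptional, pullback and pushforward give, for $l>0$ sufficiently divisible, a natural isomorphism $R(X,l(K_X+\Delta))\cong R(Y,l(K_Y+\Delta_Y))$. Restricting the displayed $\mathbb{Q}$-linear equivalence to $T$ gives $K_T+\Omega_Y = g^*(K_S+\Omega) + E'|_T$, with $E'|_T\geq 0$ a $g$-exceptional divisor on $T$, so $R(S,l(K_S+\Omega))\cong R(T,l(K_T+\Omega_Y))$ as well; these identifications commute with the restriction maps, whence $R_S(X,l(K_X+\Delta))\cong R_T(Y,l(K_Y+\Delta_Y))$. Now the triple $(Y,\Delta_Y,T)$ satisfies every hypothesis of Theorem~\ref{t_rational}, which produces a $\mathbb{Q}$-Cartier kawamata log terminal pair $(T,\Theta)$ with $R_T(Y,l(K_Y+\Delta_Y))\cong R(T,l(K_T+\Theta))$ for $l$ sufficiently divisible. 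Composing the isomorphisms finishes the proof. The main obstacle is the construction of $\Delta_Y$: one needs simultaneously the PLT condition on $(Y,\Delta_Y)$, the general-ample status of $A_Y$, the canonicity of $(T,\Omega_Y+A_Y|_T)$, and the identification $K_Y+\Delta_Y\sim_{\mathbb{Q}} f^*(K_X+\Delta)+E'$ with $E'\geq 0$ exceptional. The delicate point is trace canonicity, which is achieved by blowing up $Y$ further along the non-canonical locus of the trace; the extra exceptional divisors so introduced must be absorbed into $B_Y'$ with coefficients strictly below $1$, which in turn forces $A_Y$ to be chosen small enough, but this can always be arranged because $A$ is genuinely general.
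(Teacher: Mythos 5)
Your proposal is correct and follows essentially the same route as the paper: pass to a log resolution $Y$ with strict transform $T$ of $S$ on which the trace pair is canonical (the paper arranges terminal, which is stronger but serves the same purpose), replace $f^*A$ by a very general member of the ample system $|f^*A-F|_{\mathbb{Q}}$ for a small exceptional $F$, identify $R_S(X,\cdot)$ with $R_T(Y,\cdot)$, and invoke Theorem~\ref{t_rational}. The only cosmetic difference is that the paper writes the new boundary as $\Gamma=\Gamma'-f^*A+F+C$ rather than "absorbing" $F$ into the fractional part, but the construction is the same.
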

\begin{proof} If $f\colon\map Y.X.$ is a log resolution of $(X,\Delta)$ then we may write
$$
K_Y+\Gamma'=f^*(K_X+\Delta)+E,
$$
where $\Gamma'\geq 0$ and $E\geq 0$ have no common components, $f_*\Gamma'=\Delta$ and
$f_*E=0$.  If $T$ is the strict transform of $S$ then we may choose $f$ so that
$(T,\Psi'=(\Gamma'-T)|_T)$ is terminal.  Note that $T$ is not contained in the stable base
locus of $K_Y+\Gamma'$ as $S$ is not contained in the stable base locus of $K_X+\Delta$.

Pick a $\mathbb{Q}$-divisor $F$ such that $f^*A-F$ is ample and $(Y,\Gamma'+F)$ is purely
log terminal.  Pick $m>1$ so that $m(f^*A-F)$ is very ample and pick $mC\in |m(f^*A-F)|$
very general.  Then 
$$
(Y,\Gamma=\Gamma'-f^*A+F+C \sim_{\mathbb{Q}}\Gamma'),
$$
is purely log terminal and if $m$ is sufficiently large $(T,\Psi+C|_T)$ is terminal, where
$\Psi=(\Gamma-T)|_T$.

On the other hand
\begin{align*} 
R(X,k(K_X+\Delta))&\cong R(Y,k(K_Y+\Gamma))  \qquad \text{and}\\ 
R_S(X,k(K_X+\Delta))&\cong R_T(Y,k(K_Y+\Gamma)),
\end{align*} 
for any $k$ sufficiently divisible.  Now apply \eqref{t_rational} to $(Y,\Gamma)$.  \end{proof}

\begin{proof}[Proof of \eqref{t_m}] By \eqref{t_restricted} we may assume that $Z$ is
affine and by \eqref{l_restricted}, it suffices to prove that the restricted algebra is
finitely generated.  As $Z$ is affine, $S$ is mobile and as $f$ is birational, the divisor
$\Delta-S$ is big.  But then
$$
\Delta-S\sim_{\mathbb{Q}} A+B,
$$ 
where $A$ is a general ample $\mathbb{Q}$-divisor and $B\geq 0$.  As $S$ is mobile, we may
assume that the support of $B$ does not contain $S$.  Now
$$
K_X+\Delta'=K_X+S+(1-\epsilon)(\Delta-S)+ \epsilon A+\epsilon B\qle K_X+\Delta, 
$$
is purely log terminal, where $\epsilon$ is any sufficiently small positive rational
number. By \eqref{l_truncation}, we may replace $\Delta$ by $\Delta'$. We may therefore
assume that $\Delta=S+A+B$, where $A$ is a general ample $\mathbb{Q}$-divisor and $B\geq
0$.  Since we are assuming Theorem~\ref{t_ezd}$_{n-1}$, \eqref{t_model} implies that the
restricted algebra is finitely generated.\end{proof}

\bibliographystyle{hamsplain}
\bibliography{/home/mckernan/Jewel/Tex/math}

\end{document}